\theoremstyle{plain}
\newtheorem{theorem}{Theorem}
\theoremstyle{remark}
\newtheorem{remark}[theorem]{Remark}
\newtheorem{example}[theorem]{Example}
\theoremstyle{plain}
\newtheorem{lemma}[theorem]{Lemma}
\newtheorem{proposition}[theorem]{Proposition}
\numberwithin{theorem}{section}
\numberwithin{pro}{section}
\numberwithin{equation}{section}
\numberwithin{equation}{section}
\def\N{{\mathbb N}}
\def\R{{\mathbb R}}
\def\C{{\mathbb C}}
\newcommand{\E}{{\mathbb E}}
\renewcommand{\P}{{\mathbb P}}
\newcommand{\A}{{\mathcal A}}
\newcommand{\F}{{\mathcal F}}
\newcommand{\Schw}{{\mathscr S}}
\renewcommand{\Re}{\hbox{\rm Re}\,}
\newcommand{\calL}{L}
\newcommand{\one}{{{\bf 1}}}
\newcommand{\wh}{\widehat}
\newcommand {\ud}{\, \mathrm{d}}
\begin{document}

\author{Martin Ondrej\'at}
\address{The Czech Academy of Sciences \\ Institute of Information Theory and Automation \\ Pod Vod\'arenskou v\v e\v z\'{\i} 4 \\ 182 08 Prague 8\\ Czech Republic} \email{ondrejat@utia.cas.cz}

\author{Mark Veraar}
\address{Delft Institute of Applied Mathematics\\
Delft University of Technology \\ P.O. Box 5031\\ 2600 GA Delft\\The
Netherlands} \email{M.C.Veraar@tudelft.nl}

\thanks{The research of the first named author was supported by the Czech Science Foundation grant no. 19-07140S}
\thanks{The second named author is supported by the VIDI subsidy 639.032.427 of the Netherlands Organisation for Scientific Research (NWO)}

\title[Temporal regularity of stochastic convolutions]{On temporal regularity of stochastic convolutions in $2$-smooth Banach spaces}

\keywords{temporal regularity; stochastic convolution; $2$-smooth Banach space; Besov-Orlicz space}


\begin{abstract}
We show that paths of solutions to parabolic stochastic differential equations have the same regularity in time as the Wiener process (as of the current state of art). The temporal regularity is considered in the Besov-Orlicz space $B^{1/2}_{\Phi_2,\infty}(0,T;X)$ where $\Phi_2(x)=\exp(x^2)-1$ and $X$ is a $2$-smooth Banach space.

\bigskip

\noindent {\tiny SUMMARY.} Nous montrons que les trajectoires des solutions des \'equations aux derive\'es partielles stochastiques paraboliques ont la m\^eme r\'egularit\'e en temps que le processus de Wiener (\`a la pointe de la connaissance actuelle). La r\'egularit\'e temporelle est consid\'er\'ee dans l'espace de Besov-Orlicz $B^{1/2}_{\Phi_2,\infty}(0,T;X)$ o\`u $\Phi_2(x)=\exp(x^2)-1$ et $X$ est un espace de Banach $2$-lisse.

\end{abstract}

\maketitle

\section{Introduction}

It is well known that paths of the Brownian motion $W$ belong to the H\"older spaces $\mathcal C^\alpha([0,T])$ for $\alpha<1/2$ a.s.\ but $\Bbb P\,(W\in\mathcal C^\alpha([0,T]))=0$ for $\alpha\ge 1/2$. Zbigniew Ciesielski showed in \cite{ciesielski1991modulus} that one can obtain smoothness of order $1/2$ in the Besov spaces $B^{\frac12}_{p,\infty}(0,T)$ for $p<\infty$, and later on, Bernard Roynette proved in \cite{Roy} that this is actually the best regularity in the scale of the Besov spaces one can get, i.e. that the brownian sample paths are in the class of Besov spaces $B^\alpha_{p,q}(0,T)$ a.s. if and only if $\alpha<1/2$, or $\alpha=1/2$, $p<\infty$ and $q=\infty$. The H\"older spaces are particular cases of Besov spaces as $\mathcal C^\alpha=B^\alpha_{\infty,\infty}$ and $B^\alpha_{p,\infty}\subseteq C^{\alpha-\frac 1p}$ for $\alpha\in(0,1)$ and $p\in[1,\infty]$ e.g. by \cite{Simon}.
It follows that there is no smallest H\"older space or Besov space to which brownian paths belong to almost surely. However, if one allows for more general H\"older spaces - so called modulus H\"older spaces (that generalize the class of the standard H\"older spaces), i.e. $f\in \mathcal C^\varphi([0,T])$ if and only if
$$
|f(t)-f(s)|\le c\varphi(|t-s|)\text{ for all }s,t\in[0,T]\text{ and some finite constant }c,
$$
then one can get to the end - to the smallest space in this class with the desired property. Namely, Paul L\'evy showed in \cite{levy_37} that almost all paths of $W$ belong to the modulus H\"older space $\mathcal C^g([0,T])$ with $g(r):=|r\log r|^{1/2}$ for $r$ small and this space is the best (i.e. smallest) among all modulus H\"older spaces $\mathcal C^\varphi([0,T])$ with this property, see e.g. \cite[Theorem 1.1.1]{csrev_81}.

The remaining problem was that the Besov spaces $B^{1/2}_{p,\infty}(0,T)$ for $p<\infty$ and the L\'evy space $\mathcal C^g([0,T])$ are not included one in another, see e.g. \cite{CKR_1993}, so the smallest space containing almost all brownian paths was still missing.

In 1993, Zbigniew Ciesielski found a function space that is contained both in $B^{1/2}_{p,\infty}(0,T)$ for all $p<\infty$ and in the L\'evy space $\mathcal C^g([0,T])$, see e.g. \cite{CKR_1993}, and almost all paths of $W$ belong to it, see \cite{CI_1993}. It is the Besov-Orlicz space $B^{1/2}_{\Phi_2,\infty}(0,T)$ where $\Phi_2(x)=\exp(x^2)-1$. This result was later generalized in \cite{HytVer}, using a different method of proof, to cover also Wiener processes with values in a Banach space $X$.

There are several papers in which precise Besov regularity of other stochastic processes than Brownian motion are studied: fractional Brownian motion \cite{CKR_1993}, \cite{Vercor}), $d$-dimensional white noise \cite{Veraarwhitenoise}), L\'evy noise \cite{aziznejad2018wavelet}, \cite{FFU}, \cite{FUW}, \cite{Schilling97}. The paper \cite{BeOh} studies optimal path regularity of periodic Brownian motion in modulation spaces, Wiener amalgam spaces, Fourier–Lebesgue spaces and Fourier–Besov spaces on the torus.

In \cite{OSK}, it was proved that not only the Wiener process has paths in $B^{1/2}_{\Phi_2,\infty}(0,T)$ almost surely but that the same holds true for all continuous local martingales with Lipschitz continuous quadratic variation. And, moreover,  that there is a continuity property in the sense that convergence in probability of the quadratic variations in the Lipschitz norm yields convergence in probability of the continuous local martingales in the norm of $B^{1/2}_{\Phi_2,\infty}(0,T)$. Consequently, paths of solutions to stochastic differential equations with locally bounded non-linearities belong to the space $B^{1/2}_{\Phi_2,\infty}(0,T)$ almost surely.

Unfortunately, the idea of the proof in \cite{OSK} was based on a change-of-time argument and therefore it was not applicable to infinite-dimensional martingales and SPDEs. In this paper, we overcome this drawback and we generalize the results in \cite{OSK} and \cite{HytVer} not only to infinite-dimensional stochastic integrals but also to stochastic convolutions in $2$-smooth Banach spaces, and, consequently, we show that paths of mild solutions to parabolic stochastic differential equations in any $2$-smooth Banach space $X$ have paths in the Besov-Orlicz space $B^{1/2}_{\Phi_2,\infty}(0,T;X)$ almost surely. Let us recall that e.g. $L^p$, $W^{s,p}$, $B^\alpha_{p,q}$, $F^\alpha_{p,q}$ are $2$-smooth for $p,q\in[2,\infty)$, $s>0$, $\alpha\in\Bbb R$. Our main result is as follows and is already new in the Hilbert space setting. More details on the function spaces can be found in Section \ref{sec:prel} and details on stochastic integration and convolutions can be found in Sections \ref{sec:Stochint} and \ref{sec:SPDE}, respectively.

\begin{theorem}\label{thm:mainintro}
Let $X$ be a separable $2$-smooth Banach space and let $A$ be the generator of an analytic $C_0$-semigroup on $X$.
Let $f\in L^0_{\F}(\Omega;L^\infty(0,T;\gamma(H,X)))$. Let $H$ be a separable Hilbert space and $W$ be an $H$-cylindrical Brownian motion. Then the mild solution $u$ of the problem:
\begin{equation*}
d u = A u \ud t + f \ud W, \ \ u(0) = 0
\end{equation*}
satisfies $u\in B^{1/2}_{\Phi_2,\infty}(0,T;X)$ a.s. and the corresponding solution mapping $f\mapsto u_f$
is continuous in the following sense:
$$
\|f_n-f\|_{L^\infty(0,T;\gamma(H,X))}\xrightarrow{\Bbb P} 0\quad\implies\quad\|u_{f_n}-u_f\|_{B^{1/2}_{\Phi_2,\infty}(0,T;X)}\xrightarrow{\Bbb P} 0.
$$
\end{theorem}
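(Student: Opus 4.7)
The plan is to prove the sub-Gaussian increment estimate
\begin{equation*}
\bigl(\E\|u(t)-u(s)\|_X^p\bigr)^{1/p}\le C\sqrt{p}\,\|f\|_{L^\infty(0,T;\gamma(H,X))}\,(t-s)^{1/2},\qquad p\ge 2,\quad 0\le s\le t\le T,
\end{equation*}
and then invoke a Kolmogorov-type criterion for the Besov--Orlicz scale stating that Banach-valued processes with such sub-Gaussian increments have sample paths in $B^{1/2}_{\Phi_2,\infty}(0,T;X)$ almost surely, a Banach-space version of Ciesielski's theorem in the spirit of \cite{HytVer,CI_1993}. The first step is a standard localization via the stopping times $\tau_n=\inf\{t\colon\|f\|_{L^\infty(0,t;\gamma(H,X))}>n\}\wedge T$, which reduces everything (including the final continuity statement) to the case $\|f\|_{L^\infty(0,T;\gamma(H,X))}\le K$ almost surely for some deterministic $K$.

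\textbf{Splitting the increment.} Using the semigroup identity $S(t-r)=S(t-s)S(s-r)$, I would split
\begin{equation*}
u(t)-u(s)=\underbrace{\int_s^t S(t-r)f(r)\,\ud W(r)}_{I_1}+\underbrace{\int_0^s(S(t-s)-I)S(s-r)f(r)\,\ud W(r)}_{I_2}.
\end{equation*}
For $I_1$, the BDG inequality in $2$-smooth Banach spaces directly yields $(\E\|I_1\|^p)^{1/p}\lesssim\|S\|_{L^\infty(0,T;\B(X))}K\sqrt{p}\,(t-s)^{1/2}$, which is already the desired sub-Gaussian bound.

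\textbf{The delicate piece $I_2$ via analyticity.} This is where analyticity of $S$ is essential. For any $\theta\in(0,1)$ (up to an irrelevant shift of $A$) one has
\begin{equation*}
\|(-A)^{-\theta}(S(h)-I)\|_{\B(X)}\le C_\theta h^\theta,\qquad \|(-A)^\theta S(v)\|_{\B(X)}\le C_\theta v^{-\theta},
\end{equation*}
and composing these gives $\|(S(h)-I)S(v)\|_{\B(X)}\lesssim\min(1,(h/v)^\theta)$. Using the right-ideal property of $\gamma$-norms and the substitution $v=s-r$,
\begin{equation*}
\int_0^s\|(S(t-s)-I)S(s-r)f(r)\|_{\gamma(H,X)}^2\,\ud r\le K^2\int_0^s\min\bigl(1,((t-s)/v)^{2\theta}\bigr)\,\ud v.
\end{equation*}
The crux is that one is free to choose $\theta\in(1/2,1)$: the right-hand side is then $O(t-s)$, and BDG again yields $(\E\|I_2\|^p)^{1/p}\lesssim K\sqrt{p}\,(t-s)^{1/2}$. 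Summing the two bounds produces the full sub-Gaussian increment control on $u$, and the Kolmogorov-type criterion then gives $u\in B^{1/2}_{\Phi_2,\infty}(0,T;X)$ almost surely.

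\textbf{Continuity and main obstacle.} Since the entire chain of estimates is linear in the data, the same bound applies verbatim to $u_{f_n}-u_f$ with $K$ replaced by $\|f_n-f\|_{L^\infty(0,T;\gamma(H,X))}$; combined with the initial localization this gives convergence in probability in the $B^{1/2}_{\Phi_2,\infty}$-norm. The hard part will be the sub-Gaussian control of $I_2$: a naive choice $\theta\le 1/2$ only delivers H\"older regularity of order strictly less than $1/2$ and misses the critical Besov--Orlicz scale entirely. The fact that one may instead take $\theta>1/2$ -- so that $\int_0^s(h/v)^{2\theta}\,\ud v$ stays proportional to $h$ -- is exactly the smoothing gain bought by the analyticity of the semigroup, and is the technical heart of the argument.
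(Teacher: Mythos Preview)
Your increment estimate is correct: the splitting $u(t)-u(s)=I_1+I_2$ together with the analyticity bound $\|(S(h)-I)S(v)\|\lesssim\min(1,(h/v)^\theta)$ for $\theta\in(\tfrac12,1)$ does give
\[
\bigl(\E\|u(t)-u(s)\|_X^p\bigr)^{1/p}\le C\sqrt{p}\,K\,(t-s)^{1/2}.
\]
The gap is in the next step. There is no ``Kolmogorov-type criterion'' that turns sub-Gaussian increment moments into almost-sure membership in $B^{1/2}_{\Phi_2,\infty}$. The references you invoke, \cite{CI_1993} and \cite{HytVer}, treat \emph{Gaussian} processes and rely essentially on the Schauder expansion with independent Gaussian coefficients (equivalently, on Gaussian concentration); neither result says anything about a general process whose increments merely have sub-Gaussian moments. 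From such moment bounds alone, a Kolmogorov/Garsia--Rodemich--Rumsey argument delivers the L\'evy modulus $\sqrt{h|\log h|}$, i.e.\ membership in the modular H\"older space $\mathcal C^g$, but the paper explicitly recalls that $\mathcal C^g$ and $B^{1/2}_{\Phi_2,\infty}$ are not comparable --- so this falls short. Concretely, your bounds give $\E Y_{n,p}^p\le(C\sqrt p K)^p$ uniformly in $n$ for the dyadic quantities $Y_{n,p}=2^{n/2}\|\Delta_{2^{-n}}u\|_{L^p}$, but the $B^{1/2}_{\Phi_2,\infty}$-norm requires controlling $\sup_n Y_{n,p}$ almost surely, and a uniform moment bound over $n$ cannot produce that without further structure.

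The paper supplies exactly this missing structure, but by a completely different route. It does \emph{not} estimate increments of $u$ directly. Instead it uses the representation $S\diamond f=(f\cdot W)+A\bigl(S*(f\cdot W)\bigr)$ and treats the two pieces separately. For the stochastic integral $f\cdot W$ (a genuine martingale) the key is a conditional-expectation/orthogonality argument: one writes $Y_{n,p}^p-Z_{n,p}^p$ with $Z_{n,p}^p$ built from conditional moments, shows the differences form an orthogonal sequence in $m$ so that $\sum_n\E|Y_{n,p}^p-Z_{n,p}^p|^2<\infty$, and bounds $Z_{n,p}^p$ pathwise via the conditional BDG estimate (Lemma~\ref{lem:bmina}). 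This martingale structure is precisely what lets one pass from ``uniformly bounded in $n$'' to ``$\sup_n$ bounded a.s.''. The term $A(S*(f\cdot W))$ is then handled by a \emph{deterministic} maximal-regularity theorem in the Besov--Orlicz scale (Proposition~\ref{prop:detconv2} and Theorem~\ref{cor:detconv2}), applied pathwise once $f\cdot W\in B^{1/2}_{\Phi_2,\infty}$ is known. Your direct approach bypasses this decomposition and therefore has no substitute for the orthogonality step; that is why it cannot close.
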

Theorem \ref{thm:mainintro} will be proved in Section \ref{sec:SPDE} where a more general result will be discussed as well. At first sight the condition on $f$ seems quite special but, typically, $f$ is of the form $f = B(v)$, where $v$ is the solution to an SPDE (for instance $v = u)$ and where $B$ is a Lipschitz function on $X$. In this case one usually has $v\in L^0(\Omega;C([0,T];X))$ and $f = B(v)$ indeed satisfies the required condition.

The class of $2$-smooth Banach spaces plays an important role in stochastic analysis in infinite dimensions. For instance for this class of spaces one can obtain exponential estimates for discrete martingales (see \cite{Pin})) and sharp maximal inequalities for stochastic integrals and convolutions (see \cite{BrzPes}, \cite{NZ} and \cite{Seid10}). It is an open problem whether there is an extension of the results of this paper to the class of UMD Banach spaces $X$ (see \cite{NVW15}). In particular, motivated by \cite{OSK} it would be interesting to obtain an analogue of Theorem \ref{thm:mainindef} below for $X$-valued continuous local martingales with a suitable quadratic variation. Note that recently the existence of such a quadratic variation was established in \cite{Yar2018}.

Temporal regularity in H\"older spaces $C^\alpha([0,T];X)$ or Besov spaces $B^\alpha_{p,q}(0,T;X)$ for solutions to SPDEs driven by Wiener processes was established so far only for $\alpha<1/2$. It is not possible to list all relevant papers here so we refer the reader just to some of them, e.g. \cite{DPKZ}, \cite{B_1997}, \cite{PS_1998}, \cite{MM_2000}, \cite{SSVui_2002}, \cite{SSVui_2003}, \cite{BN_2003},  \cite{DalSS_2005}, \cite{NVW_2012_a}, \cite{NVW_2012_b}, \cite{CKLF_2013}, \cite{NVW_2015}, \cite{AKM_2016}.

{\em Acknowledgment:}
The authors thank the referees for a careful study of the paper and for their comments and recommendations.

\section{Preliminaries}\label{sec:prel}

For the theory of vector-valued function spaces used in this paper we refer the reader to \cite{Am97, Ama09, HNVW1,Pick_Sickel, Schm,SchmSiunpublished, SchmSi05, Tr97} and references therein.

\subsection{Orlicz spaces\label{subsec:Orlicz}}

For extensive treatments of the theory of Orlicz function we refer to \cite{RaoRen, Zaa}.

Let $X$ be a Banach space, $\mathcal N$ a Young function, i.e. a non-negative, non-decreasing, left-continuous, convex function on $[0,\infty)$ such that $\mathcal N(0)=0$, $\mathcal N(\infty-)=\infty$ and let $(U,\mathcal U,\mu)$ be a $\sigma$-finite measure space. Then, for a Bochner measurable function $f:U\to X$, we define the Luxemburg norm
$$
\|f\|_{L^{\mathcal N}(U,\mu)}=\inf\,\left\{\lambda>0:\int_U\mathcal N(\|f\|_X/\lambda)\ud \mu\le 1\right\}
$$
and the Banach space $L^{\mathcal N}(U,\mu;X)=\{f:\|f\|_{L^{\mathcal N}(U,\mu;X)}<\infty\}$ equipped with the Luxemburg norm is called the Orlicz space with the Young function $\mathcal N$ (sometimes it is called an $N$-function). If there is no confusion, we will write shortly just $\|\cdot\|_{\mathcal N}$ instead of $\|\cdot\|_{L^{\mathcal N}(U,\mu;X)}$.

Orlicz spaces can be introduced alternatively and equivalently via the norm being the middle term in the formula \eqref{eq:lux_norm} below:
\begin{equation}\label{eq:lux_norm}
\|f\|_{\mathcal N}\le\inf_{\lambda>0}\frac{1}{\lambda}\left[1+\int_U\mathcal N(\lambda\|f\|_X)\ud \mu\right]\le 2\|f\|_{\mathcal N}
\end{equation}
for every Bochner measurable function $f:U\to X$, see \cite[Lemma 2.1]{HytVer}.

\begin{example}[scaling] Let $G$ and $U$ be open sets in $\Bbb R^d$, let $g:G\to U$ be a diffeomorphism such that $a\le|\operatorname{det}g^\prime|\le b$ on $G$ for some positive constants $a$, $b$. Then, by convexity of $\mathcal N$,
\begin{equation}\label{eq:scaling}
\min\,\{a,1\}\|f\circ g\|_{L^{\mathcal N}(G;X)}\le\|f\|_{L^{\mathcal N}(g[G];X)}\le\max\,\{b,1\}\|f\circ g\|_{L^{\mathcal N}(G;X)}
\end{equation}
holds for every Bochner measurable function $f:g[G]\to X$.
\end{example}

\subsection{Besov-Orlicz spaces on $\Bbb R$\label{subsec:BesovI}}

Let $\Schw(\R;X)$ denote the $X$-valued Schwartz functions. Let $\Schw'(\R;X) = \calL(\Schw(\R), X)$ denote the space of vector-valued tempered distributions.

Fix $\varphi \in \Schw(\R)$  such that
\begin{equation}\label{eq:propvarphi}
0\leq \wh{\varphi}(\xi)\leq 1, \quad  \xi\in \R, \qquad  \wh{\varphi}(\xi) = 1 \ \text{ if } \ |\xi|\leq 1, \qquad  \wh{\varphi}(\xi)=0 \ \text{ if } \ |\xi|\geq \frac32.
\end{equation}
Let $\wh{\varphi}_0 = \wh{\varphi}$, $\wh{\varphi}_1(\xi) = \wh{\varphi}(\xi/2) - \wh{\varphi}(\xi)$ and
\[\wh{\varphi}_k(\xi) = \wh{\varphi}_1(2^{-k+1} \xi) = \wh{\varphi}(2^{-k}\xi) - \wh{\varphi}(2^{-k+1}\xi),  \qquad \xi\in \R, \qquad  k\geq 1.\]

Fix also $\phi\in\Schw(\R)$ such that the support of $\wh\phi$ is contained in the set $[\frac 12<|\xi|<2]$,
\begin{equation}\label{eq:propphi}
\sum_{k\in\Bbb Z}\wh\phi(2^{-k}\xi)=1\quad\text{for}\quad\xi\ne 0,
\end{equation}
and define $\phi_j(x)=2^j\phi(2^j\xi)$ for $x\in\Bbb R$ and $j\in\Bbb Z$.

For a Banach space $X$, a Young function (see Section \ref{eq:lux_norm} for the definition) $\mathcal N$, $q\in [1,\infty]$, and $s\in\R$ the {\em Besov-Orlicz space} $B_{\mathcal N,q}^s(\R;X)$ is defined as the space of all $f\in {\mathscr S}'(\R;X)$ for which
\[ |f|_{B_{\mathcal N,q}^s (\R;X)} := \Big\| \big( 2^{ks}\varphi_k* f\big)_{k\geq 0} \Big\|_{\ell^q(L^{\mathcal N}(\R;X))} < \infty.\]
This defines a Banach space. One can check that if one uses a different function $\varphi$, this leads to the same space with an equivalent norm.

We also define the {\em homogeneous Besov-Orlicz space} $\dot B_{\mathcal N,q}^s(\R;X)$ as the space of all $f\in {\mathscr S}'(\R;X)$ for which
\[ |f|_{\dot B_{\mathcal N,q}^s (\R;X)} := \Big\| \big( 2^{js}\phi_j* f\big)_{j\in\Bbb Z} \Big\|_{\ell^q(L^{\mathcal N}(\R;X))} < \infty.\]
This defines a complete pseudonormed space.

We refer the readers to \cite{Pick_Sickel} on basic properties of real-valued Besov-Orlicz spaces and to \cite{BeLo} for real-valued homogeneous Besov spaces. Both vector-valued spaces do not differ significantly from their real-valued counterparts, as observed already in \cite{Pick_Sickel}.

\begin{remark}
$B_{\mathcal N,q}^s (\R;X)$ is the standard Besov space $B_{p,q}^s (\R;X)$ if $\mathcal N(t)=t^p$ for $p\in[1,\infty)$.
\end{remark}

\subsection{Besov-Orlicz spaces on intervals} In this section, we introduce Besov-Orlicz spaces on intervals $I\subseteq\Bbb R$ and we will show that if $I=\Bbb R$ then the norms here and in Section \ref{subsec:BesovI} are equivalent. Next we introduce several equivalent norms, we construct an extension operator and finally we show how the spaces change under scalings. For the purposes of the paper, it is important that the constants in \eqref{eq:scaling}, \eqref{eq:eq_fourier}, \eqref{eq:Johnen}, \eqref{eq_dyadic} and \eqref{eq:ext_oper} do not depend on $X$ and $\mathcal N$. We refer the readers for details on real-valued Besov spaces to \cite{Tr1} and for the vector-valued Besov spaces to the treatise \cite{Ko}. Below, $X$ is a Banach space, $I$ a bounded or unbounded interval in $\Bbb R$, $\alpha\in (0,1)$, $\mathcal N$ a Young function and $q\in [1, \infty]$.

\subsubsection{Equivalent norms} Let $f:I\to X$ Bochner measurable and define
\begin{align*}
I(h)&=\{s\in I:s+h\in I\}
\\
\Delta_hf(s)&=f(s+h)-f(s)\text{ for } s\in I(h)
\\
\omega_{\mathcal N,I}(f,t)&=\sup\,\{\|\Delta_hf\|_{L^{\mathcal N}(I(h);X)}:|h|\leq t\}
\\
\|f\|_{\mathcal N,I,q, \alpha}&=\|2^{j\alpha} \|\Delta_{2^{-j}}f\|_{L^{\mathcal N}(I(2^{-j});X)}\|_{\ell_q(j\in\Bbb Z)}
\\
K_{\mathcal N,I}(f,t)&=\inf\,\{\|f-g\|_{L^{\mathcal N}(I;X)}+t\|\dot g\|_{L^{\mathcal N}(I;X)}:\,g\in W^{1,1}_{\rm loc}(I;X)\}
\end{align*}
where $W^{1,1}_{\rm loc}(I;X)$ denotes the Sobolev space of functions $g:I\to X$ for which the weak derivatives satisfies $\dot g\in L^1(a,b;X)$ and
$$
g(b)-g(a)=\int_a^b\dot g(s)\ud s
$$
holds for every $[a,b]\subseteq I$. In the next result we allow certain (semi)-norms to be infinite. In this case the result states that both expressions are infinite if one of them is.

\begin{proposition}\label{prop_16} Let $X$ be a Banach space, $\alpha\in(0,1)$, $\mathcal N$ and $\mathcal A$ Young functions such that $\mathcal A>0$ on $(0,\infty)$, $q\in [1, \infty]$ and $I$ a bounded or unbounded interval in $\Bbb R$. Then
\begin{equation}\label{eq:eq_fourier}
c_{\alpha,q,\varphi}^{-1}|f|_{B^\alpha_{\mathcal N,q}(\Bbb R;X)}\le\|f\|_{L^{\mathcal N}(\Bbb R;X)}+\|t^{-\alpha}\omega_{\mathcal N,\Bbb R}(f,t)\|_{L^q(0,\infty;t^{-1} \ud t)}\le c_{\alpha,q,\varphi}|f|_{B^\alpha_{\mathcal N,q}(\Bbb R;X)}
\end{equation}

\begin{equation}\label{eq:eq_fourier_homog}
c_{\alpha,q,\phi}^{-1}|f|_{\dot B^\alpha_{\mathcal A,q}(\Bbb R;X)}\le\|t^{-\alpha}\omega_{\mathcal A,\Bbb R}(f,t)\|_{L^q(0,\infty;t^{-1} \ud t)}\le c_{\alpha,q,\phi}|f|_{\dot B^\alpha_{\mathcal A,q}(\Bbb R;X)}
\end{equation}

\begin{equation}\label{eq:Johnen}
\frac 12\omega_{\mathcal N,I}(g,t)\le K_{\mathcal N,I}(g,t)\le 24\omega_{\mathcal N,I}(g,t),\qquad t>0
\end{equation}

\begin{equation}\label{eq_dyadic}
c_\alpha^{-1}\|g\|_{\mathcal N,I,q,\alpha}\le\|t^{-\alpha}\omega_{\mathcal N,I}(g,t)\|_{L^q(0,\infty;t^{-1} \ud t)}\le c_\alpha\|g\|_{\mathcal N,I,q,\alpha}
\end{equation}
hold for every Bochner measurable functions $f:\Bbb R\to X$ and $g:I\to X$ where the constants $c_{\alpha,q,\varphi}$ and $c_{\alpha,q,\phi}$ depend only on $\alpha$, $q$ and $\varphi$, resp. $\phi$ but not on $X$ or $\mathcal N$ and $c_\alpha$ only on $\alpha$ but not on $X$, $\mathcal N$, $q$ or $I$.
\end{proposition}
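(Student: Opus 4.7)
I would establish the four inequalities in the sequence \eqref{eq:Johnen}, \eqref{eq_dyadic}, \eqref{eq:eq_fourier}, \eqref{eq:eq_fourier_homog}, each reducing to its predecessor. The governing principle is that the Young function $\mathcal{N}$ and the Banach space $X$ enter only through two universal ingredients: Young's convolution inequality $\|k*f\|_{L^{\mathcal N}} \leq \|k\|_{L^1}\|f\|_{L^{\mathcal N}}$ (a consequence of Jensen's inequality applied to the convex $\mathcal{N}$, together with \eqref{eq:lux_norm}) and the Minkowski inequality for $X$-valued Bochner integrals; both produce constants depending neither on $\mathcal{N}$ nor on $X$.

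\textbf{Real-variable estimates \eqref{eq:Johnen} and \eqref{eq_dyadic}.} For the lower bound in \eqref{eq:Johnen}, given any admissible $g\in W^{1,1}_{\rm loc}(I;X)$, decompose $\Delta_h f = \Delta_h(f-g) + \int_0^h \dot g(\cdot+s)\,\ud s$; the two summands have $L^{\mathcal N}(I(h);X)$-norm at most $2\|f-g\|_{L^{\mathcal N}}$ and $|h|\,\|\dot g\|_{L^{\mathcal N}}$ respectively, so taking the infimum over $g$ yields $\omega(f,t)\le 2K(f,t)$. For the upper bound I would build an explicit near-minimizer via Steklov averaging $g_t(x):=t^{-1}\int_{-t/2}^{t/2}f(x+s)\,\ud s$ (with reflection at the endpoints of $I$ when $I$ is bounded), for which $\|f-g_t\|_{L^{\mathcal N}}\lesssim \omega(f,t)$ and $\|\dot g_t\|_{L^{\mathcal N}}\lesssim \omega(f,t)/t$ hold with explicit absolute constants whose careful accounting gives the factor $24$. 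For \eqref{eq_dyadic}, the function $\omega(g,\cdot)$ is non-decreasing and satisfies $\omega(g,2t)\le 2\omega(g,t)$ since $\Delta_{2h}g(x)=\Delta_h g(x+h)+\Delta_h g(x)$; hence $\omega$ is comparable on each dyadic interval $[2^{-j-1},2^{-j}]$ to its value at the endpoint. The lower bound then follows from $\omega(g,2^{-j})\ge\|\Delta_{2^{-j}}g\|_{L^{\mathcal N}}$, and the upper bound from the binary-expansion estimate $\omega(g,2^{-j})\le\sum_{k\ge j}\|\Delta_{2^{-k}}g\|_{L^{\mathcal N}}$ combined with Hardy's inequality in $\ell^q$ (whose constant depends only on $\alpha$).

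\textbf{Fourier-analytic equivalences \eqref{eq:eq_fourier} and \eqref{eq:eq_fourier_homog}.} In view of \eqref{eq_dyadic}, it is enough to compare the Littlewood--Paley pieces $\varphi_k*f$ with the discrete difference norms $\|\Delta_{2^{-k}}f\|_{L^{\mathcal N}}$. For each $k\ge 1$, the Fourier support of $\varphi_k$ lies in an annulus on which $\xi\mapsto e^{i\xi 2^{-k}}-1$ is bounded away from zero; defining $\widehat{m_k}(\xi):=\widehat{\varphi_k}(\xi)/(e^{i\xi 2^{-k}}-1)$ and unwinding by dilation reveals that $m_k(x)=2^{k-1}M(2^{k-1}x)$ for a single Schwartz function $M$, so $\sup_k\|m_k\|_{L^1}=\|M\|_{L^1}$ depends only on $\varphi$. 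Then $\varphi_k*f=m_k*\Delta_{2^{-k}}f$ and Young's inequality gives $\|\varphi_k*f\|_{L^{\mathcal N}}\lesssim \omega(f,2^{-k})$, which combined with \eqref{eq_dyadic} delivers one direction. For the converse, expand $\Delta_h f=\sum_k \Delta_h(\varphi_k*f)$ and use Bernstein's inequality $\|(\varphi_k*f)'\|_{L^{\mathcal N}}\lesssim 2^k\|\varphi_k*f\|_{L^{\mathcal N}}$ (proved by writing $\varphi_k*f=\widetilde\varphi_k*\varphi_k*f$ with $\widetilde\varphi_k$ localized to the annulus and applying Young to $\widetilde\varphi_k'$); then split the sum at $k_0\sim\log_2(1/|h|)$ into low and high frequencies, and apply Hardy's inequality to pass to the $L^q(\ud t/t)$-norm. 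The $\|f\|_{L^{\mathcal N}}$-term in \eqref{eq:eq_fourier} arises from $\|\varphi_0*f\|_{L^{\mathcal N}}\sim\|f\|_{L^{\mathcal N}}$; the homogeneous case \eqref{eq:eq_fourier_homog} is identical with $\phi_j$ in place of $\varphi_k$ and summation over $j\in\mathbb Z$.

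\textbf{Main obstacle.} The central challenge is enforcing that every constant be independent of both $X$ and $\mathcal{N}$, which precludes tools such as duality or real interpolation of Orlicz spaces (whose conjugate Young function would contaminate the bounds). All estimates must therefore factor through Young's convolution inequality against kernels whose $L^1$-norm is uniformly bounded in the scale $k$. The uniform construction of the kernels $m_k$ and of the Bernstein multiplier in the Fourier-analytic step is where this requirement demands genuine work, but dilation invariance of the $L^1$-norm reduces each to a single fixed Schwartz function, after which the remaining arguments are purely combinatorial.
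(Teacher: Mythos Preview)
Your proposal is correct and follows essentially the same approach as the paper. The paper's own proof merely cites the real-valued literature---Pick--Sickel for \eqref{eq:eq_fourier}, Bergh--L\"ofstr\"om for \eqref{eq:eq_fourier_homog}, Johnen--Scherer for \eqref{eq:Johnen}, and K\"onig for \eqref{eq_dyadic}---and declares the vector-valued Orlicz extension routine; your sketch is precisely the content of those references, carried out via Young's convolution inequality and Minkowski exactly as the paper's remark that the constants are independent of $X$ and $\mathcal N$ demands.
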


\begin{proof}
The above inequalities are known to hold in real-valued Besov and homogeneous Besov spaces and \eqref{eq:eq_fourier} also in real-valued Besov-Orlicz spaces (this observation was already made in \cite{Pick_Sickel}). It is actually a routine to have them proved for vector-valued functions so we content ourselves with references to the real-valued spaces. The estimation \eqref{eq:eq_fourier} can be shown as in \cite[Theorem 1]{Pick_Sickel}, the proof of \eqref{eq:eq_fourier_homog} goes along the same lines as the proof of \cite[Theorem 6.3.1]{BeLo} for real-valued $L^p$-spaces where a straightforward generalization of Lemma \cite[Lemma 1]{Pick_Sickel} to vector-valued spaces is used, \eqref{eq:Johnen} follows by a routine generalization of the result in \cite{Johnen_Scherer} from $L^p$-spaces to Orlicz spaces (see also \cite[Theorem 6.2.4]{DeVore_Lorentz} and \cite[Proposition 3.b.5]{Ko}), and \eqref{eq_dyadic} is based on the same dyadic approximation argument as in \cite[Corollary 3.b.9]{Ko}.
\end{proof}

It is thus consistent with section \ref{subsec:BesovI} to define {\em vector-valued Besov-Orlicz spaces} $B^{\alpha}_{\mathcal N,q}(I;X)$ on intervals $I\subseteq\Bbb R$ as Banach spaces via the norm
$$
\|f\|_{B^{\alpha}_{\mathcal N,q}(I;X)}=\|f\|_{L^{\mathcal N}(I;X)}+\|t^{-\alpha}\omega_{\mathcal N,I}(f,t)\|_{L^q(0,\infty;t^{-1}\ud t)},
$$
that is $B^{\alpha}_{\mathcal N,q}(I;X)=\{f\in L^{\mathcal N}(I;X):\|f\|_{B^{\alpha}_{\mathcal N,q}(I;X)}<\infty\}$.

\begin{remark} One may define, analogously, also vector-valued homogeneous Besov-Orlicz spaces on intervals but such definition does not lead to meaningful objects already in the real-valued case. We need \eqref{eq:eq_fourier_homog} just for technical purposes, see section \ref{subsec:hold}.
\end{remark}

\subsubsection{Extension operators} The inequality \eqref{eq:Johnen} yields that Besov-Orlicz spaces are isomorphic with the real-interpolation spaces between $L^{\mathcal N}(I;X)$ and $W^{\mathcal N,1}(I;X)$ while making obvious that the norms of the isomorphisms can be estimated uniformly with respect to $\alpha\in(0,1)$, $q\in[1,\infty]$, the Young function $\mathcal N$ and the Banach space $X$. Hence, every continuous linear extension operator from $L^{\mathcal N}(I;X)$ to $L^{\mathcal N}(\Bbb R;X)$ which maps $W^{\mathcal N,1}(I;X)$ into $W^{\mathcal N,1}(\Bbb R;X)$ continuously, maps $B^\alpha_{\mathcal N,q}(I;X)$ into $B^\alpha_{\mathcal N,q}(\Bbb R;X)$ continuously. It is therefore easy to see that if the operator $\mathcal E_I$ is defined by reflection at the boundary of $I$ (see \cite[Theorem 5.19]{AdaFou}), the following holds.

\begin{proposition}\label{prop:ext} Let $I$ be a non-trivial bounded or unbounded interval in $\Bbb R$. Then there exists a linear operator $\mathcal E_I$ from the space of $X$-valued Bochner measurable functions on $I$ to $X$-valued Bochner measurable functions on $\Bbb R$ such that $\mathcal E_If=f$ on $I$ and
\begin{equation}\label{eq:ext_oper}
\|\mathcal E_If\|_{B^\alpha_{\mathcal N,q}(\Bbb R;X)}\le\kappa(\alpha,|I|)\|f\|_{B^\alpha_{\mathcal N,q}(I;X)}
\end{equation}
hold for every $f:I\to X$, $\alpha\in(0,1)$, $q\in[1,\infty]$ and every Young function $\mathcal N$ where the constant $\kappa(\alpha,|I|)$ depends only on $\alpha$ and the Lebesgue measure of $I$.
\end{proposition}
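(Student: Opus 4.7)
The plan is to build $\mathcal E_I$ as a reflection across the boundary of $I$ (plus multiplication by a fixed smooth cutoff in the bounded case) and verify boundedness on the endpoint spaces $L^{\mathcal N}(I;X)$ and $W^{\mathcal N,1}(I;X)$, with constants depending on $|I|$ but not on $\mathcal N$, $X$, $\alpha$ or $q$. The Besov--Orlicz bound will then follow from the $K$-functional description of $B^\alpha_{\mathcal N,q}$ provided by \eqref{eq:Johnen} and \eqref{eq_dyadic}, whose comparison constants are absolute.

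Concretely, if $I=\mathbb R$ take $\mathcal E_I=\mathrm{id}$; if $I=(a,\infty)$ set $(\mathcal E_I f)(x)=f(x)$ for $x>a$ and $(\mathcal E_I f)(x)=f(2a-x)$ for $x<a$, so that no cutoff is needed; and if $I$ is bounded, rescale affinely to $(0,1)$, reflect across both endpoints to obtain $\widetilde f\colon(-1,2)\to X$, and set $\mathcal E_I f=\chi\widetilde f$ for a fixed smooth $\chi$ supported in $(-1,2)$ with $\chi\equiv 1$ on $[0,1]$. Absolute continuity of $f\in W^{1,1}_{\mathrm{loc}}(I;X)$ at the endpoints ensures $\widetilde f\in W^{1,1}_{\mathrm{loc}}((-1,2);X)$, and the cutoff then gives $\mathcal E_I f\in W^{1,1}_{\mathrm{loc}}(\mathbb R;X)$.

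For the endpoint estimates, the pointwise bound $\|(\mathcal E_I f)(x)\|\le\|\chi\|_\infty\|\widetilde f(x)\|$, together with the fact that $\widetilde f$ is essentially a rearrangement of $f$ on at most three disjoint copies of $I$, and the elementary inequality $\mathcal N(x/k)\le\mathcal N(x)/k$ for $k\ge 1$ (from convexity with $\mathcal N(0)=0$), yields $\|\mathcal E_I f\|_{L^{\mathcal N}(\mathbb R;X)}\le C(|I|)\|f\|_{L^{\mathcal N}(I;X)}$ with $C(|I|)$ independent of $\mathcal N$ and $X$. Almost everywhere on $\mathbb R$ one has $(\mathcal E_I f)'=\chi'\widetilde f+\chi\widetilde f'$ with $\widetilde f'$ a signed reflection of $\dot f$, and bounding each factor as above gives $\|(\mathcal E_I f)'\|_{L^{\mathcal N}(\mathbb R;X)}\le C(|I|)(\|f\|_{L^{\mathcal N}(I;X)}+\|\dot f\|_{L^{\mathcal N}(I;X)})$.

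Finally, for $f\in B^\alpha_{\mathcal N,q}(I;X)$ and any $g\in W^{\mathcal N,1}(I;X)$, decomposing $\mathcal E_I f=\mathcal E_I(f-g)+\mathcal E_I g$ and using the endpoint estimates yields
\[
K_{\mathcal N,\mathbb R}(\mathcal E_I f,t)\le C(|I|)\bigl(\|f-g\|_{L^{\mathcal N}(I;X)}+t\|g\|_{L^{\mathcal N}(I;X)}+t\|\dot g\|_{L^{\mathcal N}(I;X)}\bigr).
\]
The spurious $t\|g\|_{L^{\mathcal N}(I;X)}$ is handled by $\|g\|_{L^{\mathcal N}(I;X)}\le\|f-g\|_{L^{\mathcal N}(I;X)}+\|f\|_{L^{\mathcal N}(I;X)}$; taking the infimum over $g$ then gives, for $t\in(0,1]$, a bound of the form $K_{\mathcal N,\mathbb R}(\mathcal E_I f,t)\le C(|I|)(K_{\mathcal N,I}(f,t)+t\|f\|_{L^{\mathcal N}(I;X)})$, while for $t\ge 1$ the $K$-functional on the left is trivially at most $\|\mathcal E_I f\|_{L^{\mathcal N}(\mathbb R;X)}\le C(|I|)\|f\|_{L^{\mathcal N}(I;X)}$. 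Multiplying by $t^{-\alpha}$, taking the $L^q(0,\infty;t^{-1}\ud t)$ norm, and invoking \eqref{eq:Johnen} and \eqref{eq_dyadic} on both sides, one obtains the desired inequality \eqref{eq:ext_oper}. The main obstacle is to make every constant $\mathcal N$-, $X$-, and $q$-independent; this is secured by the purely pointwise character of reflection and cutoff, together with the absolute comparison constants of Proposition \ref{prop_16}.
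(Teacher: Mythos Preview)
Your proposal is correct and follows essentially the same approach as the paper: construct $\mathcal E_I$ by reflection at the boundary of $I$, check boundedness on the endpoint spaces $L^{\mathcal N}(I;X)$ and $W^{\mathcal N,1}(I;X)$ with $\mathcal N$-, $X$-, $q$-free constants, and then pass to $B^\alpha_{\mathcal N,q}$ via the $K$-functional characterization \eqref{eq:Johnen}. You have simply supplied the details the paper leaves implicit---the cutoff in the bounded case, the handling of the extra $t\|g\|_{L^{\mathcal N}}$ term coming from $\chi'$, and the verification that the resulting constants depend only on $\alpha$ and $|I|$ (the reference to \eqref{eq_dyadic} is not strictly needed, since the Besov--Orlicz norm on intervals is defined directly via $\omega_{\mathcal N,I}$, but it does no harm).
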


\subsubsection{Scaling}
Let $I$ be a non-trivial bounded interval in $\Bbb R$ and consider an affine bijection $g:(0,1)\to I$. Then
\begin{equation}\label{eq:BO_scaling}
\min\,\{|I|^{-1},|I|^\alpha\}\|f\|_{B^\alpha_{\mathcal N,q}(I;X)}\le\|f\circ g\|_{B^\alpha_{\mathcal N,q}(0,1;X)}\le\max\,\{|I|^{-1},|I|^\alpha\}\|f\|_{B^\alpha_{\mathcal N,q}(I;X)}
\end{equation}
holds for every Bochner measurable $f:I\to X$ by \eqref{eq:scaling}. It therefore often suffices to consider problems in the space $B^\alpha_{\mathcal N,q}(0,1;X)$, passing to the original space $B^\alpha_{\mathcal N,q}(I;X)$ by a suitable affine change of time.

\subsection{Embeddings to H\"older spaces}\label{subsec:hold}

Below, $X$ is a Banach space, $I$ a non-trivial bounded or unbounded interval in $\Bbb R$ and $\Phi_\beta(x)=\exp(x^\beta)-1$.

\subsubsection{Embeddings of Besov spaces $B^\alpha_{p,q}(I;X)$}
Let $p,q\in[1,\infty]$, $\frac 1p<\alpha<1$. Then $B^\alpha_{p,q}(I;X)$ is embedded in the H\"older space $C^{\alpha-\frac 1p}(I;X)$ continuously and there exists a constant $C_{\alpha,p}$ such that
\begin{equation}\label{holder_embeding}
\|f(a)-f(b)\|_X\le C_{\alpha,p}|a-b|^{\alpha-\frac 1p}\|t^{-\alpha}\omega_p(f,t)\|_{L^q(0,\infty;t^{-1}\ud t)}
\end{equation}
holds for every $f\in B^\alpha_{p,q}(I;X)$ and every two points $a,b\in I$ of Lebesgue density of $f$.  See e.g. \cite[Corollary 26]{Simon} for a proof.

\subsubsection{Embeddings of Besov-Orlicz spaces $B^\alpha_{\Phi_\beta,q}(I;X)$}
Let $\beta\in[1,\infty)$, $q\in[1,\infty]$, $\alpha\in(0,1)$. Then $B^\alpha_{\Phi_\beta,q}(I;X)$ is embedded in the modular H\"older space $C^{r^\alpha|\log r|^{1/\beta}}(I;X)$ continuously, i.e. there exists a continuous positive non-decreasing function $\zeta:[0,\infty)\to[0,\infty)$ such that
$$
\lim_{t\to 0}\frac{\zeta(t)}{t^\alpha|\log t|^{\frac 1\beta}}=c
$$
for some $c\in(0,\infty)$ and
\begin{equation}\label{mod_holder_embeding}
\|f(a)-f(b)\|_X\le\|t^{-\alpha}\omega_{\Phi_\beta,I}(f,t)\|_{L^q(0,\infty;t^{-1}\ud t)}\zeta(|a-b|)
\end{equation}
holds for every $f\in L^{\Phi_\beta}(I;X)$ and every two points $a,b\in I$ of Lebesgue density of $f$, and
\begin{equation}\label{mod_cb_embeding}
\|f\|_{L^\infty(I;X)}\le c(\alpha,|I|)\|f\|_{B^\alpha_{\Phi_\beta,q}(I;X)}
\end{equation}
holds by definition for every  Bochner measurable $f:\Bbb R\to X$.

\begin{proof}
Because of trivial embeddings of the Besov-Orlicz spaces, it suffices to show \eqref{mod_holder_embeding} for $q=\infty$. And since \eqref{mod_holder_embeding} is a local property, it suffices to consider bounded intervals only. Towards this end, write shortly $\Phi$ instead of $\Phi_\beta$ and pick $\lambda>\|t^{-\alpha}\omega_{\Phi,I}(f,t)\|_{L^\infty(0,\infty)}$. Then, by the Garsia, Rodemich, Rumsey lemma \cite[Lemma 1.1]{GRR} (see also \cite[Lemma 5.1]{OSK} for the infinite-dimensional version),
$$
\int_{I\times I}\Phi\left(\frac{\|f(a)-f(b)\|}{\lambda|a-b|^\alpha}\right)\,da\,db\le 2|I|,
$$
hence
$$
\|f(x)-f(y)\|\le 8\lambda\alpha\int_0^{|x-y|}u^{\alpha-1}\Phi_{-1}(2|I|u^{-2})\,du
$$
holds for all points of Lebesgue density $x,y\in I$.

As far as the inequality \eqref{mod_cb_embeding} with $I=\Bbb R$ is concerned, choosing $p\in (1, \infty)$ such that $\alpha-\frac1p>0$ we have (see \cite[Theorem 2.8.1(c)]{Tr1})
\[\|f\|_{L^\infty(I;X)}\le C_{\alpha,p}\|f\|_{B^\alpha_{p,q}(I;X)}\leq C_{\alpha,\beta,p} \|f\|_{B^\alpha_{\Phi_\beta,q}(I;X)},\]
where the latter estimate follows from $C_{\gamma} x^\gamma\leq e^x-1$ for $x\geq 0$. For other $I$, one uses an extension argument based on \eqref{eq:ext_oper}.
\end{proof}

\subsection{Extensions by zero}\label{subsec:ext_stop}

Below, $X$ is a Banach space, $p\in(1,\infty]$, $\alpha\in(0,1)$, $\beta\in[1,\infty)$ and $\Phi_\beta(x)=\exp(x^\beta)-1$. If $\alpha p>1$, then there exists a constant $C$ such that
\begin{align}
\|f\|_{B^\alpha_{p,\infty}(\Bbb R;X)}&\le C\|f\|_{B^\alpha_{p,\infty}(0,\infty;X)}\label{ext_by_zero_p}
\\
\|f\|_{B^\alpha_{\Phi_\beta,\infty}(\Bbb R;X)}&\le C\|f\|_{B^\alpha_{\Phi_\beta,\infty}(0,\infty;X)}\label{ext_by_zero_exp}
\end{align}
hold for every continuous function $f:\Bbb R\to X$ such that $f=0$ on $(-\infty,0]$.

\begin{proof}
Let $\mathcal N$ denote either $x^p$ or $\Phi_\beta$. Then, $\omega_{\mathcal N,\Bbb R}(f,t)\le\omega_{\mathcal N,\Bbb R_+}(f,t)+\|f\|_{L^{\mathcal N}(0,t;X)}$,  $\|f\|_{L^{\mathcal N}(0,t;X)}\le\|f\|_{B^\alpha_{\mathcal N,\infty}(0,t;X)}$ and, for small $t>0$,
\begin{align*}
\|f\|_{L^{\mathcal N}(0,t;X)}&\le\|f\|_{B^\alpha_{\mathcal N,\infty}(\Bbb R_+;X)}\|\zeta\|_{L^{\mathcal N}(0,t;X)}
\\
&\le\|f\|_{B^\alpha_{\mathcal N,\infty}(\Bbb R_+;X)}\zeta(t)\|1\|_{L^{\mathcal N}(0,t;X)}
\\
&\le\|f\|_{B^\alpha_{\mathcal N,\infty}(\Bbb R_+;X)}\zeta(t)/\mathcal N^{-1}(t^{-1})
\\
&\le Ct^\alpha\|f\|_{B^\alpha_{\mathcal N,\infty}(\Bbb R_+;X)}
\end{align*}
where $\zeta(x)=cx^{\alpha-\frac 1p}$ or $\zeta(x)=x^\alpha|\log x|^\frac 1\beta$ for small $x>0$ respectively by \eqref{holder_embeding} and \eqref{mod_holder_embeding}.
\end{proof}

For $\alpha\in (0,\tfrac1p)$ a more general result holds (see \cite{Runst-Sickel96} a full treatment of the subject).

\begin{lemma}\label{lem:pointwise}
Let $p\in [1, \infty)$, $q\in [1, \infty]$ and $\alpha\in (0,\frac1p)$. There exists a constant $C>0$ such that
\begin{equation}\label{eq:extensionzero2}
\|\one_{(0,\infty)} f\|_{B^{\alpha}_{p,q}(\R;X)}\leq C \|f\|_{B^{\alpha}_{p,q}(\R;X)}
\end{equation}
for every $f\in B^{\alpha}_{p,q}(\R;X)$
\end{lemma}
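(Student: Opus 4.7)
The plan is to reduce the statement to a fractional Hardy-type inequality and prove the latter by an averaging argument. Since $\|\one_{(0,\infty)} f\|_{L^p(\R;X)} \leq \|f\|_{L^p(\R;X)}$ is trivial, only the difference part of the Besov norm requires work. For $h > 0$, splitting $\Delta_h(\one_{(0,\infty)} f)(s)$ according to the signs of $s$ and $s+h$ gives
\[
\omega_{p,\R}(\one_{(0,\infty)} f, h)^p \leq \omega_{p,\R}(f, h)^p + \|f\|_{L^p(0, h; X)}^p,
\]
with a symmetric bound for $h<0$. In view of Proposition \ref{prop_16}, the problem reduces to the Hardy-type bound
\[
\bigl\|t^{-\alpha} \|f\|_{L^p(0, t; X)}\bigr\|_{L^q(0,\infty;\, t^{-1}\,dt)} \leq C \|f\|_{B^{\alpha}_{p,q}(\R;X)}. \qquad (\ast)
\]

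To prove $(\ast)$, fix a parameter $N > 1$ to be chosen later. For $u \in (0, t)$ and $v \in (t, Nt)$, the elementary inequality $\|f(u)\|_X^p \leq 2^{p-1} \|f(u) - f(v)\|_X^p + 2^{p-1} \|f(v)\|_X^p$, averaged in $v$ over $(t,Nt)$ and integrated in $u$ over $(0,t)$, yields after applying Fubini
\[
\|f\|_{L^p(0, t;X)}^p \leq \frac{2^{p-1} N}{N-1}\,\omega_p(f, Nt)^p + \frac{2^{p-1}}{N-1}\,\|f\|_{L^p(t, Nt; X)}^p.
\]
Iterating this estimate over the scales $(N^k t, N^{k+1} t)$, $k \geq 0$, with the tail bound $\|f\|_{L^p(N^k t, N^{k+1}t; X)} \leq \|f\|_{L^p(\R;X)}$ and the fact that $(2^{p-1}/(N-1))^k \to 0$ once $N > 2^{p-1}+1$, produces
\[
\|f\|_{L^p(0, t; X)}^p \leq C_N \sum_{k \geq 0} \left(\tfrac{2^{p-1}}{N-1}\right)^k \omega_p(f, N^{k+1} t)^p.
\]

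Taking the $L^q$ norm in $t$ against the weight $t^{-\alpha q - 1}\,dt$---via Minkowski if $q \geq p$, or via the subadditivity of $(\cdot)^{q/p}$ if $q < p$---and using the scaling identity $\|\omega_p(f,a\,\cdot)\|_{L^q(t^{-\alpha q - 1}\,dt)} = a^\alpha \|\omega_p(f,\cdot)\|_{L^q(t^{-\alpha q - 1}\,dt)}$ reduces $(\ast)$ to the convergence of a geometric series with ratio $\tfrac{2^{p-1}N^{\alpha p}}{N-1}$. This ratio is strictly less than $1$ as soon as $N > 2^{(p-1)/(1-\alpha p)}$, which is a finite choice precisely because $\alpha p < 1$. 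The constant $C$ in $(\ast)$ then depends only on $\alpha, p, q$.

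The crux---and the place where the hypothesis $\alpha < 1/p$ enters decisively---is the final step: the naive window $N=2$ gives a ratio $2^{p-1+\alpha p} \geq 1$ for $p \geq 2$, so the iteration would diverge; only enlarging the window $N$ sufficiently beats the scaling factor $N^{\alpha p}$, and this is possible exactly when $\alpha p < 1$. Alternatively, $(\ast)$ can be absorbed into the general pointwise multiplier theorem of \cite{Runst-Sickel96}; the argument sketched above is the self-contained elementary variant consistent with the style of the rest of the paper.
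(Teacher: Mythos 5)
Your argument is correct, but it takes a genuinely different route from the paper. The paper reduces to $q=p$ by real interpolation and reiteration, identifies $B^{\alpha}_{p,p}(\R;X)$ with $W^{\alpha,p}(\R;X)$ via the Gagliardo seminorm, and then disposes of the boundary term $\bigl(\int_{\R}|x|^{-\alpha p}\|g(x)\|^p\,dx\bigr)^{1/p}$ by citing the fractional Hardy inequality of Krugljak--Maligranda--Persson. You instead work directly with the modulus-of-continuity characterization from Proposition \ref{prop_16} for arbitrary $q$, isolate the same boundary obstruction in the form $\|t^{-\alpha}\|f\|_{L^p(0,t;X)}\|_{L^q(0,\infty;t^{-1}dt)}$, and prove that Hardy-type bound from scratch by averaging over an expanding window $(t,Nt)$ and iterating over the scales $N^k t$. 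What your approach buys is self-containedness (no interpolation, no reiteration, no external Hardy inequality), a treatment of all $q\in[1,\infty]$ in one stroke including $q=\infty$, and a transparent display of exactly where $\alpha p<1$ is used (beating the scaling factor $N^{\alpha p}$ by the window length $N$); what it costs is length and bookkeeping. I verified the key steps: the case split of $\Delta_h(\one_{(0,\infty)}f)$, the Fubini bound $\int_0^t\int_t^{Nt}\|f(u)-f(v)\|^p\,dv\,du\le Nt\,\omega_p(f,Nt)^p$, the vanishing of the tail $\rho^k\|f\|_{L^p(N^kt,N^{k+1}t)}^p$, and the Minkowski/subadditivity dichotomy in $q$ versus $p$ all check out. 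One minor quantitative slip: the threshold $N>2^{(p-1)/(1-\alpha p)}$ only gives $2^{p-1}N^{\alpha p}<N$, not $<N-1$, so the ratio as you wrote it is not yet below $1$; this is harmless, since taking $N$ slightly larger (e.g.\ using $N-1\ge N/2$ for $N\ge 2$ and adjusting the exponent) closes it, and the essential point that a finite admissible $N$ exists precisely because $\alpha p<1$ stands.
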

\begin{proof}
For convenience of the reader we give a self-contained argument here.
By real interpolation and reiteration (see \cite[Section 1.10]{Tr1}) it suffices to consider $q=p$. In that case $B^{s}_{p,p}(\R;X) = W^{s,p}(\R;X)$ has an equivalent norm given by $\|f\|_{L^p(\R)} + [f]_{W^s_{p}(\R^d,w;X)}$, where
\[[f]_{W^s_{p}(\R;X)}^p = \int_{\R} \int_{\R} \frac{\|f(x) - f(y)\|^p}{|x-y|^{sp+1}} \ud x\ud y = 2 \int_{0}^\infty \int_{\R} \frac{\|f(x+h) - f(x)\|^p}{|h|^{sp+1}} \ud x\ud h  ,\]
Now to prove the result let $f \in B^{s}_{p,p}(\R;X)$, and write $g = \one_{(0,\infty)} f$. By an elementary calculation one sees that
\[[g]_{W^s_{p}(\R;X)}\leq [f]_{W^s_{p}(\R;X)} + 4 \Big(\int_{\R} |x|^{-sp} \|g(x)\|^p \ud x\Big)^{\frac1p}.\]
The second term can be bounded by $C [f]_{W^s_{p}(\R;X)}$ using the fractional Hardy inequality (see \cite[Theorem 2b]{KMP}).
\end{proof}

\section{Temporal regularity of stochastic integrals}\label{sec:Stochint}

A Banach space $(X,\|\cdot\|)$ is called $2$-smooth  if there exists a constant $C>0$ such that
\[\|x+y\|^2+ \|x-y\|^2\leq 2\|x\|^2+2C\|y\|^2, \ \ x,y\in X.\]
Hilbert spaces are $2$-smooth, but also $L^p$, Sobolev spaces $W^{s,p}$, Besov spaces $B^{\alpha}_{p,q}$ and Triebel-Lizorkin spaces $F^{\alpha}_{p,q}$ for $p,q\in [2,\infty)$, $s>0$ and $\alpha\in\Bbb R$. A detailed study of $2$-smooth Banach spaces (and more general properties) can be found in \cite{Pisbook}.  In particular, it is shown there that a Banach space has the so-called martingale type $2$ property if and only if (up to an equivalent norm) $X$ is $2$-smooth. This class of Banach spaces allows for a variant of the stochastic integration theory similar to the scalar case (see \cite{B_1997,Ondr04}). For further details on stochastic integration in Banach spaces we refer the reader to the survey \cite{NVW15}.

Let $(X,\|\cdot\|)$ be a separable $2$-smooth Banach space and $H$ a separable Hilbert space. Assume $(\Omega, \A, \P)$ is a probability space with filtration $\F:=(\F_t)_{t\geq 0}$ such that $\F_0$ contains all $\P$-negligible sets from $\A$. Let $\F_{+} = (\F_{t+})_{t\geq 0}$. Let $\mathcal{P}$ and $\mathcal{P}_+$ denote the progressive $\sigma$-algebra with respect to $\F$ and $\F_+$ respectively.
Let $W$ be an $H$-cylindrical Brownian motion. For $p\in [0,\infty]$, $q\in [1, \infty]$ and $T\ge 0$ let $L^{p}_{\F}(\Omega;L^q(0,T;X)$ be the closure of the adapted strongly measurable processes in $L^{p}(\Omega;L^q(0,T;X)$. Recall from \cite[Theorem 1]{OS} that such processes have a progressive measurable modification. Let $\gamma(H,X)$ denote the space of {\em $\gamma$-radonifying operators} from $H$ into $X$ (see e.g. \cite{HNVW2} for a definition).

Let $W$ be an $H$-cylindrical Wiener process. Due to the geometric condition on $X$ for $f\in L^{0}_{\F}(\Omega;L^2(0,T;\gamma(H,X)))$ we can define the indefinite stochastic integral by $f\cdot W \in L^0(\Omega;C([0,T];X))$ by
\[f\cdot W(t) = \int_0^t f(s) d W(s), \ \ t\in [0,T].\]
The Burkholder-Davis-Gundy inequality obtained in \cite{Seid10} implies that there exists a constant $K$ depending on $X$ such that, for all $p\in [1, \infty)$, $T\ge 0$ and for all adapted $f\in L^p(\Omega;L^2(0,1;\gamma(H,X)))$,
\begin{equation}\label{eq:BDG}
\Big( \E\sup_{t\in [0,T]} \Big\|f\cdot W\Big\|^p\Big)^{1/p} \leq K \sqrt{p} \|f\|_{L^p(\Omega;L^2(0,T;\gamma(H,X)))}.
\end{equation}
It is much simpler to check the same result with a different dependence on $p$. However, the factor $\sqrt{p}$ is essential in the proofs below. The growth rate $\sqrt{p}$ is optimal already in the scalar case. This follows for instance by taking $f = \one_{[0,T]}$.

\begin{lemma}\label{lem:bmina}
Let $X$ be a separable uniform $2$-smooth Banach space. Let $T>0$, $p\in [1, \infty)$ and $q\in (2, \infty]$.
Let $f\in L_{\mathcal F}^p(\Omega;L^q(0,T;\gamma(H,X)))$ and let $M(t) = f\cdot W$. Then for all $0\le a\le t\le T$,
\begin{align*}
\Big(\E(\|M_t-M_{a}\|^{p}|\F_{a})\Big)^{1/p}\leq K {p}^{1/2} \|f\|_{L^p(\Omega;L^q(0,T;\gamma(H,X)))} (t-a)^{\frac12-\frac1q} \ \text{a.s.}
\end{align*}
\end{lemma}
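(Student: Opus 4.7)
The plan is to apply the Burkholder-Davis-Gundy inequality \eqref{eq:BDG} conditionally on $\F_a$ and then interpolate in time with H\"older's inequality to pass from the $L^2$-norm inside BDG to the $L^q$-norm appearing in the statement.

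First I would rewrite $M_t - M_a = \int_a^t f(s)\,\ud W(s)$ and observe that the shifted process $(M_{a+r} - M_a)_{r \ge 0}$ is an $X$-valued continuous martingale adapted to $(\F_{a+r})_{r \ge 0}$, representable as the stochastic integral of $f(a+\,\cdot\,)$ against the shifted $H$-cylindrical Brownian motion $\widetilde W_r := W_{a+r} - W_a$. Since \eqref{eq:BDG} holds on any filtered probability space with the universal constant $K\sqrt p$, it applies verbatim in this shifted setup, so by passing to regular conditional probabilities given $\F_a$ one obtains
\[
\E\bigl(\|M_t - M_a\|^p \bigm| \F_a\bigr) \leq K^p p^{p/2}\, \E\Big(\Big(\int_a^t \|f(s)\|_{\gamma(H,X)}^2\,\ud s\Big)^{p/2} \Bigm| \F_a\Big) \quad \text{a.s.}
\]

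Next I would apply H\"older's inequality in the time variable (pointwise in $\omega$) with conjugate exponents $q/2$ and $q/(q-2)$, admissible since $q > 2$:
\[
\int_a^t \|f(s)\|_{\gamma(H,X)}^2\,\ud s \leq (t-a)^{1-2/q}\Big(\int_a^t \|f(s)\|_{\gamma(H,X)}^q\,\ud s\Big)^{2/q}.
\]
Raising to the $p/2$-th power, bounding the $L^q(a,t)$-integral by the $L^q(0,T)$-integral, substituting into the conditional BDG estimate, and taking $p$-th roots produces
\[
\bigl(\E(\|M_t - M_a\|^p \mid \F_a)\bigr)^{1/p} \leq K p^{1/2} (t-a)^{\frac12-\frac1q}\, \bigl(\E\bigl(\|f\|_{L^q(0,T;\gamma(H,X))}^p \bigm| \F_a\bigr)\bigr)^{1/p}.
\]
Integrating over $\Omega$ and using the tower property to identify the resulting $L^p(\Omega)$-norm with $\|f\|_{L^p(\Omega;L^q(0,T;\gamma(H,X)))}$ yields the claimed bound.

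The only delicate point is preserving the sharp $\sqrt p$-constant through the conditional step. This is handled transparently by the shifted-filtration viewpoint above, which reduces the conditional BDG estimate to an unconditional application of \eqref{eq:BDG} on a different filtered probability space---exactly the setting in which the inequality from \cite{Seid10} is formulated. The remaining H\"older interpolation and the passage from $(a,t)$ to $(0,T)$ are routine bookkeeping.
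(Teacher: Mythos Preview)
Your strategy---conditional BDG followed by H\"older in time---is exactly the paper's. The paper, however, obtains the conditional inequality not via shifted filtrations and regular conditional probabilities but by the more elementary device of testing against $\mathbf 1_F$ for arbitrary $F\in\F_a$: since $\mathbf 1_F f$ is again an admissible integrand on $[a,t]$, the unconditional inequality \eqref{eq:BDG} applied to $\int_a^t\mathbf 1_F f\,\ud W$ gives
\[
\E\big(\mathbf 1_F\|M_t-M_a\|^{p}\big)\le K^{p}p^{p/2}\,\E\big(\mathbf 1_F\|f\|^{p}_{L^q(0,T;\gamma(H,X))}\big)(t-a)^{\frac p2-\frac pq}
\]
directly, and varying $F\in\F_a$ yields the conditional bound without any appeal to conditional probability measures. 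This sidesteps the measurable-selection and existence-of-regular-conditional-probability issues implicit in your shifting argument, while preserving the same $\sqrt p$ constant.

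Your last paragraph does not do what you claim. From your penultimate display you have the almost-sure inequality with \emph{random} right-hand side $\big(\E(\|f\|_{L^q}^p\mid\F_a)\big)^{1/p}$; ``integrating over $\Omega$ and using the tower property'' then produces only the \emph{unconditional} moment bound $\E\|M_t-M_a\|^p\le K^p p^{p/2}(t-a)^{p/2-p/q}\|f\|_{L^p(\Omega;L^q)}^p$, not the almost-sure estimate with deterministic right-hand side asserted in the lemma. In fact neither your argument nor the paper's final ``Hence'' step closes this gap: replacing $\E(\|f\|_{L^q}^p\mid\F_a)$ by the constant $\|f\|_{L^p(\Omega;L^q)}^p$ pointwise almost surely is not legitimate for general $f$. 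What both arguments rigorously establish is the version of the inequality with $\big(\E(\|f\|_{L^q}^p\mid\F_a)\big)^{1/p}$ on the right.
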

\begin{proof}
Let $F\in\mathcal F_a$. Then by \eqref{eq:BDG} we have
\begin{align*}
\Bbb E\big(\mathbf 1_F\|M_t-M_a\|^{p}\big)
& =\Bbb E\,\Big\|\int_a^t\mathbf 1_Ff\ud W\Big\|^{p}
\\ & \le K^{p}p^{p/2}\Bbb E\,\Big|\int_a^t\mathbf 1_F\|f\|^2_{\gamma(H,X)}\ud s\Big|^{p/2}
\\ &
\le\Bbb E\,\big(\mathbf 1_F K^{p}{p}^{p/2}\|f\|^{p}_{L^q(0,1;\gamma(H,X))}(t-a)^{\frac{p}{2} - \frac{p}{q}}\big),
\end{align*}
where on the last line we applied H\"older's inequality. Hence
$$
\Bbb E\,\big(\|M_t-M_a\|^{p}|\mathcal F_a\big)\le K^{p}{p}^{p/2}\|f\|^{p}_{L^p(\Omega;L^q(0,1;\gamma(H,X)))}(t-a)^{\frac{p}{2} - \frac{p}{q}}\quad\text{a.s.}
$$
\end{proof}

The following is our main result on the regularity of the indefinite stochastic integral. It provides the optimal path regularity properties and  norm estimates.

\begin{theorem}\label{thm:mainindef}
Let $(X,\|\cdot\|)$ be a separable $2$-smooth Banach space. Then there exists an increasing positive function $(C_t)_{t\ge 0}$ such that
\begin{enumerate}[(i)]
\item\label{it:regpas} $f\cdot W\in B^{\alpha}_{\Phi_2,\infty}(0,T;X)$ a.s.,
\item\label{it:regp} $(\E\|f\cdot W\|_{B^{\alpha}_{p,\infty}(0,T;X)}^{2p})^{1/(2p)} \leq C_T p^{1/2} \|f\|_{L^{2p}(\Omega;L^q(0,T;\gamma(H,X)))}$,
\item\label{it:regPhi} $(\E\|f\cdot W\|_{B^{\alpha}_{\Phi_2,\infty}(0,T;X)}^p)^{1/p}\leq C_T p^{1/2} \|f\|_{L^{\infty}(\Omega;L^q(0,T;\gamma(H,X)))}$,

\item\label{it:reg_L_Phi} $\|f\cdot W\|_{L^{\Phi_2}(\Omega;B^{\alpha}_{\Phi_2,\infty}(0,T;X))}\leq C_T\|f\|_{L^{\infty}(\Omega;L^q(0,T;\gamma(H,X)))}$
\item\label{it:reginftyprob} $\P(\|f\cdot W\|_{B^{\alpha}_{\Phi_2,\infty}(0,T;X)}>\varepsilon,\,\|f\|_{L^q(0,T;\gamma(H,X))}\le\delta)\leq 2\exp\,\{-C_T^{-2}\delta^{-2}\varepsilon^2\}$
\item\label{it:finalest} $\|f\cdot W\|_{L^p(\Omega;B^{\alpha}_{\Phi_2,\infty}(0,T;X))}\leq C_{T} p^{1/2}\|f\|_{L^{N_p}(\Omega;L^q(0,T;\gamma(H,X)))}$.
\end{enumerate}
hold for all $T,\varepsilon, \delta\in(0,\infty)$, $p\in [1, \infty)$, $q\in (2, \infty]$ and $f\in L^{0}_{\F}(\Omega;L^q(0,T;\gamma(H,X)))$ where $\alpha = \frac12-\frac1q$ and $N_p(t) = t^{p} \log^{p/2}(t+1)$.
\end{theorem}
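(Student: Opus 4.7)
The plan is to prove the six parts in the order (iii) $\Rightarrow$ (iv) $\Rightarrow$ (v) $\Rightarrow$ (i), followed by (ii) and (vi) as close variants; all rest on a single sub-Gaussian estimate for increments of $M:=f\cdot W$.

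\textbf{Cornerstone.} Following the proof of Lemma \ref{lem:bmina} and using H\"older in time to bound $\int_s^t\|f\|_\gamma^2\,du\le(t-s)^{2\alpha}\|f\|_{L^q(0,T;\gamma(H,X))}^2$ pathwise, I get the pathwise conditional estimate
\[
\bigl(\E(\|M_t-M_s\|^p\mid\F_s)\bigr)^{1/p}\le K\sqrt p\,(t-s)^\alpha\|f\|_{L^q(0,T;\gamma(H,X))}\quad\text{a.s.,}\ p\ge 1.
\]
Assuming temporarily that $F:=\|f\|_{L^\infty(\Omega;L^q(0,T;\gamma(H,X)))}<\infty$, the equivalence $\|Z\|_{L^{\Phi_2}(\Omega)}\asymp\sup_{p\ge1}p^{-1/2}\|Z\|_{L^p(\Omega)}$ for scalar random variables rewrites this as
\[\E\exp\bigl(\|M_t-M_s\|^2/(c_0F(t-s)^\alpha)^2\bigr)\le 2.\]

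\textbf{From sub-Gaussian increments to (iii)--(i).} For (iii) I exploit the dyadic equivalence \eqref{eq_dyadic} of Proposition \ref{prop_16},
\[\|M\|_{B^\alpha_{\Phi_2,\infty}(0,T;X)}\asymp\|M\|_{L^{\Phi_2}(0,T;X)}+\sup_{j\ge j_0}Y_j,\qquad Y_j:=2^{j\alpha}\|\Delta_{2^{-j}}M\|_{L^{\Phi_2}(I(2^{-j});X)},\]
with $j_0=\lceil\log_2(1/T)\rceil$. The alternative Luxemburg form \eqref{eq:lux_norm} applied pointwise in $\omega$ with $\lambda=2^{j\alpha}/(c_0F)$ and a Fubini step in $(t,\omega)$ yields $\E Y_j\le c_0F(1+T)$ uniformly in $j$; the $p$-th moment version $(\E Y_j^p)^{1/p}\lesssim\sqrt p\,F$ follows from the same manipulation after raising the Luxemburg bound to the power $p$ and using the joint sub-Gaussian integrability of $\|M_{t+h}-M_t\|$ in $(t,\omega)$. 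The supremum over $j\ge j_0$ is absorbed by a Gaussian-tail union bound, and this gives (iii). Part (iv) is then the $L^p\!\leftrightarrow\!L^{\Phi_2}$ equivalence applied to the scalar random variable $\|M\|_{B^\alpha_{\Phi_2,\infty}}$; part (v) follows by replacing $f$ with $\tilde f:=f\one_{\{\|f\|_{L^q}\le\delta\}}\in L^\infty(\Omega;L^q)$, applying (iv) with $F=\delta$, and using that $f\cdot W=\tilde f\cdot W$ on $\{\|f\|_{L^q}\le\delta\}$; and (i) follows from (v) via localization on $\{\|f\|_{L^q}\le n\}$ and continuity of the stochastic integral.

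\textbf{Parts (ii), (vi), and the main obstacle.} For (ii) I run the same scheme with $L^p$ in place of $L^{\Phi_2}$: Fubini and BDG of order $2p$ give $(\E\|\Delta_hM\|_{L^p(I(h);X)}^{2p})^{1/(2p)}\le T^{1/p}K\sqrt{2p}\,h^\alpha\|f\|_{L^{2p}(\Omega;L^q)}$, and the supremum over dyadic $h$ is handled by a weighted $\ell^{2p}$-summation combined with the sub-Gaussian factor $\sqrt{2p}$. Part (vi) is obtained from (iii) by decomposing $\Omega$ on the level sets $\{2^{k-1}<\|f\|_{L^q}\le 2^k\}$, applying (iii) with $F=2^k$ to each piece, and summing in $L^p(\Omega)$; the factor $\log^{p/2}(t+1)$ in $N_p$ is exactly the price of the Gaussian union bound over these dyadic levels. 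The \emph{main technical obstacle} is the control of the supremum over the infinitely many dyadic scales $j\ge j_0$ in the Besov-$\infty$ norm: a crude $\ell^r$-summation is useless because $\E Y_j^r$ is essentially independent of $j$. The proper resolution is the Fubini/Orlicz interchange applied to the joint random field $(t,\omega)\mapsto h^{-\alpha}\|M_{t+h}(\omega)-M_t(\omega)\|$ combined with Gaussian union bounds over $j$; this is precisely where the choice of $\Phi_2$ and the $L^\infty(\Omega;L^q)$ hypothesis on $f$ play an essential role.
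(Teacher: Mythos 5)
Your cornerstone (the sub-Gaussian increment bound obtained from Lemma \ref{lem:bmina} under the assumption $\|f\|_{L^q(0,T;\gamma(H,X))}\le F$ a.s.) is correct and is also the paper's starting point, but the step you yourself flag as the main obstacle — controlling $\sup_{j}Y_j$ over infinitely many dyadic scales — is not resolved by what you propose, and this is a genuine gap. From the increment bound, Fubini and \eqref{eq:lux_norm} give $(\E Y_j^r)^{1/r}\lesssim \sqrt{r}\,F$ \emph{uniformly} in $j$, hence $\P(Y_j>\lambda)\le 2e^{-c\lambda^2/F^2}$ for each fixed $j$; but a ``Gaussian-tail union bound over $j$'' then produces $\sum_j 2e^{-c\lambda^2/F^2}=\infty$. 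Uniform sub-Gaussianity of the individual $Y_j$ is simply not enough: infinitely many variables with identical sub-Gaussian tails can have an a.s.\ infinite supremum (i.i.d.\ Gaussians do). What is needed, and what your outline never supplies, is \emph{concentration of $Y_j$ at a scale that decays in $j$}. The paper obtains exactly this by decomposing $Y_{n,p}^p=Z_{n,p}^p+(Y_{n,p}^p-Z_{n,p}^p)$, where $Z_{n,p}^p$ is built from the conditional expectations $\xi_{n,m,s}=\E(\eta_{n,m,s}\mid\F_{(s+m-1)2^{-n}})$ and is bounded a.s.\ uniformly in $n$ by Lemma \ref{lem:bmina}, while $(\eta_{n,m,s}-\xi_{n,m,s})_{m}$ are \emph{orthogonal} in $L^2(\Omega)$, so that $\E|Y_{n,p}^p-Z_{n,p}^p|^2\lesssim 2^{-n}K^{2p}p^p\|f\|_{L^{2p}(\Omega;E_q)}^{2p}$ is summable in $n$ and $\sup_n$ can be replaced by an $\ell^2$-sum. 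Without this (or an equivalent concentration/chaining input) neither (ii) nor (iii) closes, and the same objection applies to your treatment of (ii), where an ``$\ell^{2p}$-summation'' over scales of terms whose moments do not decay in $j$ diverges.

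A second, more easily repaired, defect is the use of non-adapted truncations. The indicator $\one_{\{\|f\|_{L^q(0,T;\gamma(H,X))}\le\delta\}}$ is $\F_T$-measurable, so $\tilde f=f\one_{\{\|f\|_{L^q}\le\delta\}}$ is not an adapted integrand and $\tilde f\cdot W$ is not defined within the integration theory being used; the same applies to the level-set decomposition $\{2^{k-1}<\|f\|_{L^q}\le 2^k\}$ in your derivation of (vi) and to the localization in (i). The paper instead uses the stopping times $\tau_s=\inf\{t:\|f\one_{[0,t]}\|_{E_q}>s\}$ and the adapted truncations $f^{(s)}=f\one_{[0,\tau_s)}$, for which $\|f^{(s)}\|_{E_q}\le s$ and $f^{(s)}\cdot W=(f\cdot W)(\cdot\wedge\tau_s)$, and then deduces (vi) from the tail bound (v) via the purely measure-theoretic Lemma \ref{lem:ax_gauss} (which is where the Orlicz function $N_p$ enters), rather than by summing over dyadic level sets of $\|f\|_{L^q}$.
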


Part of the argument is inspired by the dyadic norm equivalence \eqref{eq_dyadic} which was used in \cite[Theorem 4.1]{HytVer} and \cite{Vercor} for Gaussian processes.
\begin{proof}
Let us start with the case $T=1$ and write $E_q = L^q(0,1;\gamma(H,X))$. To prove \eqref{it:regp}, assume that $f\in L^{2p}(\Omega;E_q)$ and denote $M = f\cdot W$ and
\[Y_{n,p}:=2^{n \alpha}\|M(\cdot+2^{-n})-M\|_{L^p(I(2^{-n});X)}.\]
We may write
\begin{align*}
Y^p_{n,p} &= \int\limits_0^{1-2^{-n}} 2^{np \alpha}\|M(t+2^{-n}) -
M(t)\|^p  \ud t
\\ & = \sum_{m=1}^{2^n-1} \int\limits_{(m-1) 2^{-n}}^{m 2^{-n}} 2^{np\alpha}\|M(t+2^{-n}) - M(t)\|^p  \ud t
\\ & = \sum_{m=1}^{2^n-1} 2^{-n}\int\limits_{0}^{1} 2^{np\alpha}\|M((s+m)2^{-n}) - M((s+m-1)2^{-n})\|^p
\ud s
\\ & =\int\limits_{0}^{1}  2^{-n}\sum_{m=1}^{2^n-1} \eta_{n,m,s} \ud s
\end{align*}
Here \(\eta_{n,m,s} = 2^{n \alpha p} \|M((s+m)2^{-n}) - M((s+m-1)2^{-n})\|^p\).
Letting
\[Z_{n,p}^p =  \int\limits_{0}^{1} 2^{-n} \sum_{m=1}^{2^n-1} \xi_{n,m,s}  \ud s \ \ \ \text{and} \ \  \ \xi_{n,m,s} = \E(\eta_{n,m,s}|\F_{(s+m-1)2^{-n}})\]
it follows that $s\mapsto\xi_{n,m,s}$ are non-negative, progressively measurable processes on $[0,1]$ (see e.g. \cite[Corollary 0.2]{OS}), $\eta_{n,m,s}$ and $\xi_{n,m,s}$ are uniformly bounded in $L^2(\Omega)$ with respect to $s\in[0,1]$ for every $n\ge 1$ and $1\le m<2^n$ and, for fixed $n\geq 1$ and $s\in[0,1]$, $(\eta_{n,m,s} -\xi_{n,m,s})_{m=1}^{2^n-1}$ is a sequence of orthogonal random variables in $L^2(\Omega)$. If we take second moments we may use Jensen's inequality to obtain
\begin{align*}
\E |Y_{n,p}^p - Z_{n,p}^p|^2 &=
\E \Big|\int\limits_{0}^{1}
\big[2^{-n} \sum_{m=1}^{2^n-1} \eta_{n,m,s} - \xi_{n,m,s}\big]
\ud s\Big|^2
\\ & \leq \int\limits_{0}^{1}  \E \Big| 2^{-n} \sum_{m=1}^{2^n-1}\big(\eta_{n,m,s} -\xi_{n,m,s}\big)\Big|^2 \ud s
\\ & = \int\limits_{0}^{1}  2^{-2n}  \sum_{m=1}^{2^n-1}\E \big| \eta_{n,m,s} -\xi_{n,m,s}\big|^2 \ud s
\\ & \leq \int\limits_{0}^{1}  2^{-2n}  \sum_{m=1}^{2^n-1} \E \eta_{n,m,s}^2 \ud s,
\end{align*}
where on the last line we used
$$
\E \big| \eta_{n,m,s} -\xi_{n,m,s}\big|^2 = \E \eta_{n,m,s}^2 - \E \xi_{n,m,s}^2\leq  \E \eta_{n,m,s}^2
$$
which follows from properties of the conditional expectation.

By \eqref{eq:BDG} and H\"older's inequality, we have
\begin{align*}
\E \eta_{n,m,s}^{2} & \leq K^{2p}_X p^{p} 2^{2\alpha np}\E \Big(\int_{(s+m-1)2^{-n}}^{(s+m)2^{-n}} \|f(r)\|_{\gamma(H,X)}^2\ud r\Big)^{p}
\\ & \leq K_X^{2p} p^{p}  \|f\|_{L^{2p}(\Omega;E_q)}^{2p}.
\end{align*}
It follows that
\begin{align}\label{eq:help1}
\E \sum_{n\geq 1} | Y^p_{n,p} - Z_{n,p}^p|^2 & \leq K_X^{2p} p^{p}  \|f\|_{L^{2p}(\Omega;E_q)}^{2p}
\end{align}
which implies
\[\lim_{n\to \infty}(Y^p_{n,p} - Z_{n,p}^p) = 0 \ \ \text{a.s.}\]
In order to show that $Y_{n,p}^p$ is bounded a.s.\ we will prove that
$Z_{n,p}^p$ is uniformly bounded a.s. Indeed, by Lemma \ref{lem:bmina} and the Fubini theorem, we have a.s.
\begin{align}\label{eq:help2}
Z_{n,p}^p & \leq \sup_{1\leq m\leq 2^n-1}\xi_{n,m,s}
\leq  K^{p}{p}^{p/2}\|f\|^{p}_{L^p(\Omega;E_q)}.
\end{align}

Therefore, from \eqref{eq_dyadic} we can conclude that $M\in B^{\alpha}_{p,\infty}(0,1;X)$ a.s.\ with
\begin{align}\label{eq:helpMY}
\|M\|_{p,I,\infty,\alpha}^p = \sup_{n\geq 1} Y_{n,p}^p \leq \Big(\sum_{n\geq 1} |Y_{n,p}^p - Z_{n,p}^p|^2\Big)^{1/2} + \sup_{n\geq 1} Z_{n,p}^p
\end{align}
and taking $L^2(\Omega)$-norms and applying the triangle inequality yields
$$
(\E \|M\|_{p,I,\infty, \alpha}^{2p})^{1/2} \leq \Big(\E\sum_{n\geq 1} |Y_{n,p}^p - Z_{n,p}^p|^2\Big)^{1/2} + (\E \sup_{n\geq 1} Z_{n,p}^{2p})^{1/2}\leq c_X^{p} p^{p/2}  \|f\|_{L^{2p}(\Omega;E_q)}^{p},
$$
where the last estimate follows from \eqref{eq:help1} and \eqref{eq:help2} and $c_X$ is a constant depending only on $X$. Similarly, by \eqref{eq:BDG}, one has that
\[\E\|M\|_{L^p(0,1;X)}^p\leq K^p p^{p/2} \|f\|_{L^{p}(\Omega;E_q)}^p\]
holds. Combining the estimates we get \eqref{it:regp} by Proposition \ref{prop_16}.

\eqref{it:regPhi}:  Assume that $f\in L^\infty_{\F}(\Omega;E_q)$. We use the equivalent norm given in \eqref{eq_dyadic}. Then, using the equivalence \eqref{eq:lux_norm} and $(1+\Phi_2(x))^p = 1+\Phi_2(\sqrt p x)$, we get
\begin{align*}
\E\|f\cdot W\|_{\Phi_2,I,\infty, \alpha}^p &\le\E \sup_{n\geq 1} \inf_{\lambda>0} \frac{1}{\lambda^p} \Big(1+  \sum_{k\geq 1} \frac{p^{k}\lambda^{2k}}{k!} Y^{2k}_{n,2k} \Big)
\\ & \leq \inf_{\lambda>0} \frac{1}{\lambda^p} \Big(1+  \sum_{k\geq 1} \frac{p^{k}\lambda^{2k}}{k!}  \E \sup_{n\geq 1} Y^{2k}_{n,2k} \Big)
\end{align*}
Now by Jensen's inequality and \eqref{eq:helpMY} we can write
\begin{align*}
\E \sup_{n\geq 1} Y^{2k}_{n,2k} \leq \Big(\E \sup_{n\geq 1} Y^{4k}_{n,2k}\Big)^{1/2}
\leq c_X^{2k} \|f\|_{L^\infty(\Omega;E_q)}^{2k} k^{k}.
\end{align*}
Therefore, using  $k^k/k!\leq e^k$ we find that
\begin{align*}
\E\|f\cdot W\|_{\Phi_2, \infty, \alpha}^p & \le \inf_{\lambda>0} \frac{1}{\lambda^p} \Big(1+  \sum_{k\geq 1} p^{k}\lambda^{2k} e^k c_X^{2k} \|f\|_{L^\infty(\Omega;E_q)}^{2k}\Big)
\\ & \leq {\frac{2^{p+2}}{3}}p^{p/2} e^{p/2}c_X^p\|f\|_{L^\infty(\Omega;E_q)}^p,
\end{align*}
by setting $\lambda^{-1} =2p^{1/2}e^{1/2}c_X\|f\|_{L^\infty(\Omega;E_q)}$. Similarly, one shows by \eqref{eq:BDG} that
\[\E\|f\cdot W\|_{L^{\Phi_2}(0,1;X)}^p\le\kappa_X^p p^{p/2} \|f\|_{L^\infty(\Omega;E_q)}^p\]
and therefore, the required estimate follows.

\eqref{it:reg_L_Phi}: follows directly from \eqref{it:regPhi} and a standard power series argument \cite[Theorem 3.4]{CI_1993}.

\eqref{it:regpas} and \eqref{it:reginftyprob}: Assume $f\in L^0_{\F}(\Omega;E_q)$ and fix a $\mathcal{P}$-measurable version of $f$.
Observe that $t\mapsto \|f\one_{[0,t]}\|_{E_q}$ is an increasing adapted process. For $q<\infty$ this is clear from the continuity, and for $q=\infty$, this follows from the equality
\[\|f\one_{[0,t]}\|_{E_\infty} = \sup_{q\in \N\setminus\{0,1\}} \|f\one_{[0,t]}\|_{E_q}.\]
Now define
\[\tau_s = \inf\{t\in [0,1]: \|f\one_{[0,t]}\|_{E_q}>s\},\qquad s>0,\]
where we take $\tau_s = \infty$ if the infimum is taken over the empty set. Then $\tau_s$ is an $\F_+$-stopping time as $t\mapsto \|f\one_{[0,t]}\|_{E_q}$ is increasing and adapted.

It follows that $f^{(s)} := f \one_{[0,\tau_s)}$ is $\mathcal{P}_+$-measurable, $\|f^{(s)}\|_{E_q}\leq s$ by definition of $\tau_s$ and $f^{(s)}\to f$ in $L^0(\Omega;E_q)$ as $s\uparrow\infty$.
Let $M^{(s)}:=f^{(s)}\cdot W$. Then $M^{(s)}\in B^{\alpha}_{\Phi_2,\infty}(0,1;X)$ a.s. Moreover, $M^{(s)}(t) = M(t\wedge \tau_s)$ and therefore, letting $s\uparrow\infty$ we find $M\in B^{\alpha}_{\Phi_2,\infty}(0,1;X)$ a.s. Now the tail estimate in \eqref{it:reginftyprob} follows from \eqref{it:reg_L_Phi} and the Chebychev inequality since, defining $\lambda=C^{-1}$,
\begin{align*}
\Bbb P\,(\|M\|_{B^\alpha_{\Phi_2,\infty}(0,1;X)}>\varepsilon,\,\|f\|_{E_q}\le C^{-1})&=\Bbb P\,(\|M\|_{B^\alpha_{\Phi_2,\infty}(0,1;X)}>\varepsilon,\,\tau_\lambda=\infty)
\\
&=\Bbb P\,(\|M^{(\lambda)}\|_{B^\alpha_{\Phi_2,\infty}(0,1;X)}>\varepsilon,\,\tau_\lambda=\infty)
\\
&\le\Bbb P\,(\|M^{(\lambda)}\|_{B^\alpha_{\Phi_2,\infty}(0,1;X)}>\varepsilon)
\\
&\le\min\,\left\{1,[\Phi_2(\varepsilon)]^{-1}\right\}\le 2e^{-\varepsilon^2}.
\end{align*}
The general inequality follows from applying the above inequality to $\widetilde f:=(C\delta)^{-1}f$ and the corresponding $\widetilde M$ and taking $\varepsilon$ appropriately.

The final assertion \eqref{it:finalest} follows from \eqref{it:reginftyprob} and Lemma \ref{lem:ax_gauss}. Note that the constant $10$ can be absorbed into the constant $C_T$.

If $T$ is general then define $\mathcal G_t=\mathcal F_{Tt}$, $f_T(t)=f(Tt)$ and $W_T(t)=T^{-1/2}W(Tt)$. Then $W_T$ is a cylindrical $(\mathcal G_t)_{t\ge 0}$-Wiener process and  $$T^{1/2}(f_T\cdot W_T)(t)=(f\cdot W)(Tt),\qquad t\ge 0$$ holds by linear substitution. We apply \eqref{it:regp}-\eqref{it:reginftyprob} to $T^{1/2}f_T$ and $W_T$ on $(0,1)$ and we obtain the general case on $(0,T)$ by scaling \eqref{eq:BO_scaling}. Finally, we realize that if \eqref{it:regp}-\eqref{it:reginftyprob} hold on $(0,T)$ with some constant $C_T$ then \eqref{it:regp}-\eqref{it:reginftyprob} hold on $(0,\tau)$ for every $0<\tau<T$ with the same constant $C_T$. Therefore $T\mapsto C(T)$ can be constructed as an increasing function.
\end{proof}

\begin{lemma}\label{lem:ax_gauss} Let $\kappa$ be a positive constant, $p\in[1,\infty)$ and let $N(t) = t^{p} \log^{p/2}(t+1)$. Then there exists a positive constant $c$ such that, whenever $A$ and $B$ are non-negative random variables and
$$
\Bbb P\,(A\ge x,\,B\le y)\le 2\exp\,\{-\kappa y^{-2}x^2\},\qquad x,y\in(0,\infty)
$$
then $\|A\|_{L^p(\Omega)}\le  10 p^{1/2} \kappa^{-1/2}\|B\|_{L^N(\Omega)}$.
\end{lemma}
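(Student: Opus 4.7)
The plan is to bound $\E A^p = p\int_0^\infty x^{p-1}\P(A\ge x)\,\ud x$ via the layer-cake formula combined with the two-term tail bound
\begin{equation*}
\P(A\ge x) \le \P(A\ge x, B\le y) + \P(B > y) \le 2e^{-\kappa x^2/y^2} + \P(B > y),
\end{equation*}
valid for any $y > 0$, with $y = y(x)$ chosen as a carefully tuned increasing function of $x$.

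First I would reduce to a normalized setting. The rescaling $A\mapsto \sqrt\kappa\, A/\|B\|_{L^N}$, $B\mapsto B/\|B\|_{L^N}$ preserves the hypothesis (with $\kappa$ replaced by $1$) and reduces the conclusion to $\E A^p\le C^p p^{p/2}$ for an absolute constant $C\le 10$, under $\kappa = 1$ and $\E N(B)\le 1$. From the Luxemburg definition one reads off the two moment bounds I need: $\E[B^p\log^{p/2}(B+1)]\le 1$ and, by splitting at $B = e-1$ where $\log(B+1)\le 1$, also $\E B^p \le e^p$.

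The integrated tail bound then gives $\E A^p \le I_1 + I_2$, where
\begin{equation*}
I_1 = 2p\int_0^\infty x^{p-1}e^{-x^2/y(x)^2}\,\ud x, \qquad I_2 = p\int_0^\infty x^{p-1}\P(B>y(x))\,\ud x = \E\bigl[y^{-1}(B)^p\bigr],
\end{equation*}
the last identity holding by Fubini when $y$ is an increasing bijection on $(0,\infty)$. For the choice $y(x) = x/\sqrt{(p+1)\log(x+e)}$ one has $e^{-x^2/y(x)^2} = (x+e)^{-(p+1)}$, so after substituting $u = x/e$ the beta integral $B(p,1) = 1/p$ gives $I_1 = 2/e$, a universal constant (which is bounded by the target $C^p p^{p/2}$ since $p\ge 1$). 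The function $y^{-1}(u)$ satisfies the implicit equation $y^{-1}(u) = u\sqrt{(p+1)\log(y^{-1}(u)+e)}$, from which an elementary algebraic verification yields a pointwise estimate of the form $y^{-1}(u) \le C u\sqrt{(p+1)(1+\log(u+e))}$. Using $(1+\log(u+e))^{p/2}\le 2^{p/2}(1+\log^{p/2}(u+1))$ and the two moment bounds on $B$ then leads to
\begin{equation*}
\E\bigl[y^{-1}(B)^p\bigr] \le C^p(p+1)^{p/2}\bigl(\E B^p + \E[B^p\log^{p/2}(B+1)]\bigr) \le (C')^p p^{p/2},
\end{equation*}
after which the full bound $\E A^p\le I_1 + I_2 \le 10^p p^{p/2}$ follows and, undoing the scaling, gives the stated inequality.

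The main delicate point will be the pointwise estimate of $y^{-1}$. Near zero, $y(x)\sim x/\sqrt{p+1}$ so that $y^{-1}$ is essentially linear, while for large $x$ one has $y(x) \sim x/\sqrt{(p+1)\log x}$ and $y^{-1}$ grows like $u\sqrt{(p+1)\log u}$; these two regimes must be checked separately, and for very large $p$ the implicit equation introduces an extra $\log p$ unless $c$ in $y$ is chosen as a small absolute constant depending on $p$ (e.g.\ $y(x) = x/\sqrt{(p+1)(1+\log(cx+e))}$ with $c\asymp 1/\sqrt p$). Tuning this constant together with the multiplicative factor in the upper bound for $y^{-1}$ is what produces the explicit numerical constant $10$ in the final estimate.
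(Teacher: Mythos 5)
Your proposal is correct in outline and is essentially the paper's argument: normalize by homogeneity, apply the layer-cake formula, split $\P(A\ge x)\le \P(A\ge x,\,B\le y(x))+\P(B>y(x))$ with a threshold $y(x)$ comparable to $N^{-1}(x^p)$, and control the second piece by the Luxemburg normalization $\E N(B)\le 1$. The difference is in the choice of threshold. The paper takes $y(x)=\mu N^{-1}(x^p)$ with $\mu=\tfrac12 p^{-1/2}\kappa^{1/2}$ outright: then $p\int_0^\infty x^{p-1}\P(B>y(x))\,\ud x=\E N(B/\mu)\le 1$ with no inversion at all, and the identity $\log\bigl(N^{-1}(s)+1\bigr)=\bigl(s^{1/p}/N^{-1}(s)\bigr)^2$ turns the Gaussian term into $\bigl[N^{-1}(x)+1\bigr]^{-\kappa/\mu^2}$, which is integrated using $N^{-1}(x)\ge x^{1/(2p)}$ for $x\ge1$; this yields $\E A^p\le 5$ and hence the constant $2\cdot 5^{1/p}\le 10$ on the nose. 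Your explicit $y(x)=x/\sqrt{(p+1)\log(x+e)}$ is the same object up to constants, but it forces you to estimate $y^{-1}$, which is exactly where the $\log p$ defect you flag originates; your proposed repair (inserting $c\asymp p^{-1/2}$ inside the logarithm) does work, but the bookkeeping as written accumulates factors ($2^p$ from the $y^{-1}$ bound, $2^p$ from comparing $1+\log(u+e)$ with $1+\log^{p/2}(u+1)$, $e^p$ from $\E B^p$) that leave you with a constant noticeably larger than $10$. Since the paper absorbs the $10$ into $C_T$ anyway this is cosmetic, but if you want the stated constant, the paper's choice of threshold is the cleaner path.
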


\begin{proof}
Define $\mu=2^{-1}p^{-1/2}\kappa^{1/2}$. By homogeneity we can assume that $\|B\|_{L^N(\Omega)} = \mu$.
Let $N^{-1}$ denote the inverse of the function $N$. Then
$(N^{-1}(x))^p\log^{p/2}(N^{-1}(x)+1) = x$  for $x\geq0$, and $x^{1/(2p)} \leq N^{-1}(x)$ for $x\geq 1$. Therefore,
\begin{align*}
\Bbb E\,A^p&=\Bbb E\,N(B/\mu)+ p\int_0^\infty x^{p-1}\Bbb P\,(A>x,\,(N(B/\mu))^{1/p}\le x)\,dx
\\
&\le 1+ 2p\int_0^\infty x^{p-1}\exp\,\{-\kappa x^2\mu^{-2}(N^{-1}(x^p))^{-2}\}\,dx
\\
&=1+2\int_0^\infty\exp\,\{-\kappa x^{2/p}\mu^{-2}(N^{-1}(x))^{-2}\}\,dx
\\
&= 1+2\int_{0}^\infty [N^{-1}(x)+1]^{-\frac{\kappa}{\mu^2}}\,dx
\\
&\le 3+2\int_{1}^\infty [N^{-1}(x)+1]^{-\frac{\kappa}{\mu^2}}\,dx
\\
&\le 3+2\int_{1}^\infty x^{-\frac{\kappa}{2p\mu^2}}\,dx = 5,
\end{align*}
where we used the definition of $\mu$. Therefore, $\|A\|_{L^p(\Omega)}\leq 5^{1/p} 2 (p/\kappa)^{1/2} \|B\|_{L^N(\Omega)}$.
\end{proof}

\begin{remark}\label{rem:othermoments} \
By \cite[Proposition IV.4.7]{RY} the $L^{2p}$-estimate in Theorem \ref{thm:mainindef} \eqref{it:regp} can be extrapolated to all $r\in (0,2p]$.
\end{remark}

\section{Temporal regularity of deterministic convolutions}

Let $X$ be a Banach space. Let $A$ be the generator of an analytic $C_0$-semigroup $(S(t))_{t\geq 0}$. We write $R(\lambda,A) = (\lambda-A)^{-1}$ for the resolvent of $A$ for $\lambda\in \rho(A)$, where $\rho(A)$ denotes the resolvent set of $A$. We say that a $C_0$-semigroup $(S(t))_{t\geq 0}$ with generator $A$ is exponentially stable if there exist $M,\omega>0$ such that $\|S(t)\|\leq M e^{-\omega t}$. We will always set $S(t) = 0$ for $t<0$. Note that for an exponentially stable analytic semigroup, one has
\[\{\lambda\in \C:\Re(\lambda)\leq 0\}\subseteq \rho(A) \ \ \text{and} \ \ \sup_{s\in \R}\|sR(is,A)\|<\infty,\]
and the Fourier transform $\F$ of $S$ satisfies $\F S(s) = -R(is, A)$. For details on semigroup theory we refer the reader to \cite{EN}.

Below we discuss a maximal regularity result in the scale of Besov-Orlicz functions.
Previous regularity and Fourier multiplier results for evolution equations on Besov spaces can be found in \cite{Am97}. Below we discuss a result on general Besov-Orlicz spaces.

Let $\mathcal N$ be a Young function (see section \ref{subsec:Orlicz}) and $g\in L^{\mathcal N}(\R;X)$. Then the convolution $S*g$ is well-defined a.e.\ by Lemma \ref{young}.

\begin{lemma}\label{young}
Let $f\in L^{\mathcal N}(\Bbb R^d;\calL(X))$ and $g\in L^1(\Bbb R^d;X)$. Then
\[\|f*g\|_{L^{\mathcal N}(\Bbb R^d;X)}\le\|f\|_{L^{\mathcal N}(\Bbb R^d;\calL(X))}\|g\|_{L^1(\Bbb R^d;X)}.\]
\end{lemma}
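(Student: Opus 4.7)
The plan is to reduce the operator-valued statement to a scalar Young-type inequality in the Orlicz space $L^{\mathcal N}(\R^d)$. The pointwise bound
\[
\|(f*g)(x)\|_X \;\le\; \int_{\R^d} \|f(x-y)\|_{\calL(X)}\, \|g(y)\|_X \,\ud y
\]
shows that with $\phi(x):=\|f(x)\|_{\calL(X)}$ and $\psi(y):=\|g(y)\|_X$ it is enough to establish
\[
\|\phi * \psi\|_{L^{\mathcal N}(\R^d)} \;\le\; \|\phi\|_{L^{\mathcal N}(\R^d)}\, \|\psi\|_{L^1(\R^d)}
\]
for non-negative real-valued $\phi\in L^{\mathcal N}(\R^d)$ and $\psi\in L^1(\R^d)$.

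For the scalar estimate the plan is to normalize, assuming $\|\psi\|_{L^1(\R^d)}=1$ (the trivial case $\psi\equiv 0$ aside). Then $\psi(y)\ud y$ is a probability measure on $\R^d$, and for any $\lambda>\|\phi\|_{L^{\mathcal N}(\R^d)}$ I would apply Jensen's inequality to the convex function $\mathcal N$ to get the pointwise bound
\[
\mathcal N\!\left(\frac{(\phi*\psi)(x)}{\lambda}\right)
\;\le\; \int_{\R^d} \mathcal N\!\left(\frac{\phi(x-y)}{\lambda}\right) \psi(y)\,\ud y.
\]
Integrating in $x$, swapping order of integration by Tonelli, and using translation invariance of Lebesgue measure together with $\|\psi\|_{L^1(\R^d)}=1$ yields
\[
\int_{\R^d} \mathcal N\!\left(\frac{(\phi*\psi)(x)}{\lambda}\right)\ud x
\;\le\; \int_{\R^d} \mathcal N\!\left(\frac{\phi(z)}{\lambda}\right)\ud z \;\le\; 1,
\]
so $\|\phi*\psi\|_{L^{\mathcal N}(\R^d)}\le\lambda$ by the Luxemburg definition. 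Letting $\lambda\downarrow\|\phi\|_{L^{\mathcal N}(\R^d)}$ gives the normalized inequality, and rescaling by $\|\psi\|_{L^1(\R^d)}$ recovers the general one.

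There is no real obstacle: the core of the argument is the interplay of convexity of $\mathcal N$ with the convolution via Jensen. The only points requiring a brief remark are the a.e.\ well-definedness of the Bochner integral $(f*g)(x)=\int f(x-y)g(y)\,\ud y$ and its strong measurability; these follow from the pointwise majorant argument above, since $\phi*\psi$ is finite a.e.\ once the scalar inequality is known, and strong measurability of the integrand in $y$ comes from the assumed strong measurability of $f$ in $\calL(X)$ and of $g$ in $X$.
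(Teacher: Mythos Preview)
Your argument is correct and is essentially the same as the paper's: both normalize $g$ (equivalently $\psi$) to a probability measure, apply Jensen's inequality for the convex function $\mathcal N$ to the convolution integral, integrate in $x$, and conclude via the definition of the Luxemburg norm. The only cosmetic difference is that you first reduce to the scalar inequality for $\phi=\|f\|_{\calL(X)}$ and $\psi=\|g\|_X$, whereas the paper applies Jensen directly to the vector-valued expression.
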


\begin{proof} Define $\mu=\|g\|_{L^1(\Bbb R^d)}$ and a probability measure $d\theta=\mu^{-1}|g|\ud x$. Then
$$
\mathcal N(\|f*g(x)\|_X/\lambda)\le\int_{\Bbb R^d}\mathcal N(\mu\|f(x-y)\|/\lambda)\ud \theta(y)
$$
by the Jensen inequality. If $\lambda>\mu\|f\|_{L^{\mathcal N}(\Bbb R^d;\calL(X))}$ then, integrating both sides, we get
$$
\int_{\Bbb R^d}\mathcal N(\|f*g(x)\|_X/\lambda)\ud x\le 1,
$$
by the definition of the Luxemburg norm.
\end{proof}

The next result is formulated for $\alpha>0$, so that the convolution is well-defined by the above discussion. Using the theory of vector-valued tempered distributions and suitable approximation argument one can extend the result to any $\alpha\in \R$.
\begin{proposition}\label{prop:detconv2}
Let $\alpha>0$, let $\mathcal{N}$ be a Young's function and let $q\in [1, \infty]$. Assume that $A$ generates an analytic $C_0$-semigroup $(S(t))_{t\geq 0}$ which is exponentially stable.  Then there exists a constant $C$ depending only on $S$ such that
\begin{itemize}
\item $S*f\in\operatorname{Dom}\,(A)$ a.e.\ and
\item $|S*f|_{B_{\mathcal N,q}^\alpha(\R;D(A))}\leq C |f|_{B_{\mathcal N,q}^\alpha(\R;X)}$
\end{itemize}
holds for every $f\in B_{\mathcal N,q}^\alpha(\R;X)$.
\end{proposition}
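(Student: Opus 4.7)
The graph-norm equivalence
$$
\|S*f\|_{B^\alpha_{\mathcal N,q}(\R;D(A))}\simeq\|S*f\|_{B^\alpha_{\mathcal N,q}(\R;X)}+\|A(S*f)\|_{B^\alpha_{\mathcal N,q}(\R;X)}
$$
reduces the problem to two separate estimates. I would use the Fourier-analytic definition of $B^\alpha_{\mathcal N,q}(\R;X)$ from Section~\ref{subsec:BesovI} throughout, since it covers all $\alpha>0$ uniformly. The a.e.\ inclusion $S*f(t)\in\operatorname{Dom}(A)$ then follows from $A(S*f)\in L^{\mathcal N}(\R;X)$ and the closedness of $A$ applied to smooth approximations of $f$.

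For the first summand, exponential stability gives $S\in L^1(\R;\calL(X))$. Combined with $\varphi_k*(S*f)=S*(\varphi_k*f)$ and Lemma~\ref{young}, this yields $\|\varphi_k*(S*f)\|_{L^{\mathcal N}(\R;X)}\le\|S\|_{L^1}\|\varphi_k*f\|_{L^{\mathcal N}(\R;X)}$, and weighted $\ell^q$-summation over $k$ gives $\|S*f\|_{B^\alpha_{\mathcal N,q}(\R;X)}\le\|S\|_{L^1}\|f\|_{B^\alpha_{\mathcal N,q}(\R;X)}$.

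For the second summand I would pass to the Fourier side. Starting from $f\in\Schw(\R;X)$ and extending by density, the symbol of $A(S*f)$ is
$$
m(\xi)\;:=\;AR(i\xi,A)\;=\;i\xi R(i\xi,A)-I.
$$
Analyticity plus exponential stability yield $\|R(i\xi,A)\|_{\calL(X)}\le C(1+|\xi|)^{-1}$, and differentiating the resolvent identity twice gives the Mikhlin-type bounds
$$
\sup_{\xi\in\R}\|m(\xi)\|+\sup_{\xi\ne 0}\|\xi\,m'(\xi)\|+\sup_{\xi\ne 0}\|\xi^2 m''(\xi)\|<\infty.
$$
I would then localize in frequency: fix a bump $\tilde\psi\in C_c^\infty(\R)$ equal to $1$ on $\supp\wh{\varphi_1}$ and slightly larger, set $\tilde\psi_k(\xi):=\tilde\psi(2^{1-k}\xi)$ for $k\ge 1$, handle $k=0$ with an analogous low-frequency cut-off, and put $K_k:=\F^{-1}(m\tilde\psi_k)$. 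A rescaling $\tilde m_k(\xi):=m(2^{k-1}\xi)\tilde\psi(\xi)$ together with Cauchy--Schwarz using the three uniform bounds above produces the pointwise decay $\|K_k(x)\|_{\calL(X)}\le C(1+|x|)^{-2}$ and hence the crucial uniform estimate
$$
\sup_{k\ge 0}\|K_k\|_{L^1(\R;\calL(X))}\le C.
$$
Since $\wh{\varphi_k*f}$ lives where $\tilde\psi_k\equiv 1$, one has $\varphi_k*A(S*f)=K_k*(\varphi_k*f)$; Lemma~\ref{young} then gives $\|\varphi_k*A(S*f)\|_{L^{\mathcal N}}\le C\|\varphi_k*f\|_{L^{\mathcal N}}$, and weighted $\ell^q$-summation yields the desired bound on $\|A(S*f)\|_{B^\alpha_{\mathcal N,q}(\R;X)}$.

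\textbf{Main obstacle.} The technical heart is the uniform Bernstein-type estimate $\sup_k\|K_k\|_{L^1(\R;\calL(X))}<\infty$ for an operator-valued symbol on a general Banach space $X$. Since $X$ need not be Hilbert or UMD, Plancherel and classical Mikhlin-type multiplier theorems on $L^{\mathcal N}$ are unavailable, so one must extract integrable decay of $K_k$ directly from pointwise operator-norm bounds on $m$ and its first two derivatives---which is precisely where both analyticity (controlling $R(i\xi,A)$ on the imaginary axis) and exponential stability (giving uniformity down to $\xi=0$) are essential.
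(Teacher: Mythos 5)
Your core multiplier argument is essentially the paper's own proof: both reduce the problem to bounding $A(S*f)$ block by block via the operator-valued symbol $AR(i\cdot,A)$, both localize in frequency with Littlewood--Paley pieces, and both extract the uniform bound $\sup_n\|\mathcal{F}^{-1}(\widehat\psi_n\,AR(i\cdot,A))\|_{L^1(\R;\calL(X))}\le C$ from the resolvent estimates $\|AR(is,A)\|+(1+|s|)\|R(is,A)\|\le C$ before applying Lemma \ref{young} on each block. The paper obtains the $L^1$ bound by splitting $\int_{|r|>t}+\int_{|r|<t}$ and optimizing over $t$, you by rescaling to a fixed annulus and integrating by parts twice; these are the same Bernstein-type estimate. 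The remaining differences are cosmetic: you control $\|S*f\|_{B^\alpha_{\mathcal N,q}(\R;X)}$ directly from $S\in L^1(\R;\calL(X))$, while the paper uses invertibility of $A$. (One small slip: the pointwise bound $\|K_k(x)\|_{\calL(X)}\le C(1+|x|)^{-2}$ cannot hold uniformly in $k$ as written, since $K_k$ concentrates at scale $2^{-k}$ with height $2^k$; but because $\|K_k\|_{L^1}=\|\mathcal{F}^{-1}\tilde m_k\|_{L^1}$, the conclusion you actually use is correct.)

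The genuine gap is the step ``starting from $f\in\Schw(\R;X)$ and extending by density.'' For $q=\infty$ --- which is precisely the case the paper needs, since its main theorems live in $B^{1/2}_{\Phi_2,\infty}$ --- Schwartz functions are not dense in $B^\alpha_{\mathcal N,\infty}(\R;X)$, so no density argument is available there. Moreover, for a Young function such as $\Phi_2$ that fails the $\Delta_2$-condition, $L^{\Phi_2}(\R;X)$ is not separable and the closure of nice functions is a proper subspace, so even for $q<\infty$ the density of $\Schw(\R;X)$ in $B^\alpha_{\mathcal N,q}(\R;X)$ is not something you may take for granted. The paper circumvents both issues by first proving the estimate for $f\in B^\alpha_{\mathcal N,q}(\R;D(A))$, for which $S*f\in D(A)$ a.e.\ is immediate, and then approximating a general $f$ by $f^{(n)}=nR(n,A)f$: these converge in the weaker norms $B^\beta_{\mathcal N,q}$ with $\beta<\alpha$ and $q<\infty$, which suffices to pass to the limit in each fixed Littlewood--Paley block, and for $q=\infty$ one takes the supremum over blocks only at the end --- a Fatou-type argument rather than a density argument. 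You would need to replace your density step by something of this kind; the closedness-of-$A$ argument for $S*f\in\operatorname{Dom}(A)$ a.e.\ should likewise be run along this approximating sequence rather than along Schwartz approximants.
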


\begin{proof}
We use the strategy of proof given in \cite[Theorem 6.1.6]{BeLo}. First consider the case $f\in B_{\mathcal N,q}^\alpha(\R;D(A))$. Then the integral $S*f$ is well-defined by Lemma \ref{young} and $S*f\in D(A)$ a.e. To prove the required estimate, since $A$ is invertible it is enough to estimate the norm of $A S*f$. Notice that for all $k\in \mathbb{N}_0$ we have
\[\varphi_k* f = \sum_{l=-1}^1 \psi_{k+l} * \varphi_k*f,\]
where $\psi_{k} = \varphi_k$ for all $k\neq 0$, $\psi_0=\varphi_0$ and $\psi_{-1}=0$. Fix $k\in
\mathbb{N}$, and denote $f_k=\varphi_k*f$. Then $f_k\in L^{\mathcal N}(\R;X)$ by Lemma \ref{young}.  For $n=-1, 0, 1, 2, \ldots$ we may
write
\[\psi_{n} *\varphi_k* S*f = \psi_{n}* S*f_k.\]
We estimate $\|{\varphi}_n*A S*f_k\|_{L^{\mathcal N}(\R;X)}$. For each $n\geq -1$,
we use Lemma \ref{young} to estimate
\[\begin{aligned}
\|{\psi}_{n}*AS*f_k\|_{L^{\mathcal N}(\R;X)} &=
 \|\mathcal{F}^{-1}(\widehat\psi_n \, AR(\cdot i,A)) f_k\|_{L^{\mathcal N}(\R;X)}
\\ & \leq \|\mathcal{F}^{-1}(\widehat\psi_n \,  AR(\cdot i,A))\|_{L^1(\R;\calL(X))}
\|f_k\|_{L^{\mathcal N}(\R;X)}.
\end{aligned}\]
Let $t>0$ be fixed. First consider $n\geq 1$. Clearly, it holds that
\[\begin{aligned}
\int_{r>t} & \|\mathcal{F}^{-1}(\widehat\psi_n \, AR(\cdot
i,A))(r)\|_{\calL(X)} \ud r \\ &= \int_{r>t} r^{-2}
\|\mathcal{F}^{-1}D^2( \widehat\psi_n \, AR(\cdot
i,A))(r)\|_{\calL(X)} \ud r
\\ &\leq \sup_{r\in \R}\|\mathcal{F}^{-1}D^2 (\widehat\psi_n \,
AR(\cdot i,A))(r)\|_{\calL(X)} \int_{r>t} r^{-2} \ud r
\\ & \leq \|(D^2 \widehat\psi_n \, AR(\cdot i,A))\|_{L^1(\R;\calL(X)}
\frac{1}{t},
\end{aligned}\]
where $D$ stands derivation. One also has that
\[\begin{aligned}
\int_{0\leq r<t} \|\mathcal{F}^{-1}(\widehat\psi_n \, AR(\cdot
i,A))(r)\|_{\calL(X)}\ud r &\leq t \|\widehat\psi_n \, A
R(\cdot i,A)\|_{L^1(\R;\calL(X)}.
\end{aligned}
\]
Therefore, we deduce that
\[\begin{aligned}
\|\mathcal{F}^{-1}&(\widehat\psi_n \, AR(\cdot i,A))\|_{L^1(\R;\calL(X))} \\ & \leq
t^{-1} \|D^2 (\widehat\psi_n \,AR(\cdot
i,A))\|_{L^1(\R;\calL(X))} + t \|\widehat\psi_n \, A
R(\cdot i,A)\|_{L^1(\R;\calL(X)}.
\end{aligned}\]
Minimization over $t>0$ gives
\begin{equation}\label{eq:FinverseL1}
\begin{aligned}
\|\mathcal{F}^{-1}(&\widehat\psi_n \, A R(\cdot
i,A))\|_{L^1(\R;\calL(X))}
\\ & \leq \|(D^2 \widehat\psi_n \, AR(\cdot
i,A))\|_{L^1(\R;\calL(X))}^{\frac12} \|\widehat\psi_n \,
AR(\cdot i,A)\|_{L^1(\R;\calL(X))}^{\frac12}.
\end{aligned}
\end{equation}

Since $\widehat\psi_n$ has support in $I_n := [-2^{n+1},-2^{n-1}] \cup
[-2^{n-1},-2^{n+1}]$ it follows that
\[\begin{aligned}
\|&D^2 (\widehat\psi_n \, AR(\cdot
i,A))\|_{L^1(\R;\calL(X))}
\\ & \leq \|D^2{\widehat\psi}_n\|_{\infty} \|AR(\cdot
i,A)\|_{L^1(I_n;\calL(X))} \\ & \ \ +
2\|D{\widehat\psi}_n\|_{\infty} \|A R(\cdot
i,A)^2\|_{L^1(I_n;\calL(X))} + \|\widehat\psi_n\|_{\infty}
\|AR(\cdot i,A)^3\|_{L^1(I_n;\calL(X))}
\\ & \leq
C_1 2^{-2n} 2^{n+1} + C_{2} 2^{-n}
+ C_{3} 2^{-(n+1)} \leq C_{4}
2^{-n}
\end{aligned}\]
where we used
\begin{equation}\label{eq:Rthetadelta}
\|A R(is,A)\|_{\calL(X)} + (1+|s|)\|R(is,A)\|_{\calL(X)} \leq C.
\end{equation}
Similarly one has that
\[\|\widehat\psi_n \, AR(\cdot i,A)\|_{L^1(\R;\calL(X))}
\leq C_{S,\psi} 2^{n}.
\]

Combining these estimates with \eqref{eq:FinverseL1} we arrive at
\[\|\mathcal{F}^{-1}(\widehat\psi_n \,
AR(\cdot i,A))\|_{L^1(\R;\calL(X))} \leq
C_{S,\psi}.
\]
The same type of estimates holds for $n=0$. We may conclude that
\[\begin{aligned}
|AS*f|_{B_{\mathcal N,q}^\alpha(\R;X)} &\leq \sum_{l=-1}^1 (\sum_{k\geq 0} (2^{\alpha
k} \|{\psi}_{k+l}*AS*f_k\|_{L^{\mathcal N}(\R;X)})^q)^{\frac1q}
\\ &\leq C_{S,\psi} (\sum_{k\geq 0} (2^{\alpha k} \|f_k\|_{L^{\mathcal N}(\R;X)})^q)^{\frac1q}
\\
& = C_{S,\psi} |f|_{B_{\mathcal N,q}^\alpha(\R;X)},
\end{aligned}\]
and the required estimate follows.

Now for general $f$, if $q<\infty$, then the required estimate follows by a density argument using the standard fact that $nR(n,A)\to I$ in the strong operator topology (see \cite[Proposition 10.1.7]{HNVW2}). If $q=\infty$ we use a similar approximation argument, but a density does not work in general. Let $f\in B_{\mathcal N,\infty}^\alpha(\R;X)$ and consider $f^{(n)} = nR(n,A) f$ for $n\geq 1$. Then $f^{(n)}\in B_{\mathcal N,q}^\alpha(\R;D(A))$ and by the previous estimates applied to $f^{(n)}$, we have
\[|S*f^{(n)}|_{B_{\mathcal N,\infty}^\alpha(\R;D(A))} \leq C |f^{(n)}|_{B_{\mathcal N,\infty}^\alpha(\R;X)}\leq \tilde{C} |f|_{B_{\mathcal N,\infty}^\alpha(\R;X)}. \]
Since $f\in B_{\mathcal N,q}^{\beta}(\R;X)$ for any $\beta<\alpha$ and $q\in [1, \infty]$, it follows that $S*f^{(n)} \to S*f$ in $B_{\mathcal N,q}^\beta(\R;D(A))$ for any $q\in [1, \infty)$. In particular, for every $k\geq 0$, $\varphi_k*S*f^{(n)}\to \varphi_k*S*f$ in $L^{\mathcal{N}}(\R;D(A))$. Therefore, for every $k\geq 0$,
\[2^{\alpha k} |\varphi_k*A S*f|_{L^{\mathcal{N}}(\R;X)} = 2^{\alpha k} |\varphi_k*A S*f^{(n)}|_{L^{\mathcal{N}}(\R;X)} \leq \tilde{C} |f|_{B_{\mathcal N,\infty}^\alpha(\R;X)}. \]
Taking the supremum over all $k\geq 0$ the result for $q=\infty$ follows as well.
\end{proof}

\begin{remark}
Analogous results to those in Proposition \ref{prop:detconv2} do not hold with $B^{\alpha}_{\mathcal{N}, q}$ replaced by $L^{\mathcal{N}}$ in general (except if $X$ is a Hilbert space). We refer to \cite{KuWe} for a detailed discussion on maximal regularity on $L^p$-spaces. Most result extend to the setting of Orlicz spaces by standard extrapolation arguments for singular integrals.
\end{remark}

\begin{theorem}\label{cor:detconv2}
Let $p\in(1,\infty)$, $\alpha\in(0,1)$, $\alpha \neq 1/p$, $\beta\in[1,\infty)$, $T>0$ and let $(S(t))_{t\geq 0}$ be an analytic $C_0$-semigroup generated by $A$. Let $\mathcal N\in\{x\mapsto x^p,\Phi_\beta\}$. Then there exists a constant $C$ such that, for every $f\in B_{N,\infty}^\alpha(0,T;X)$, satisfying $f(0+)=0$ if $N(x) = x^p$ and $\alpha p>1$, the convolution integral
\[
u(t)=\int_0^tS(t-s)f(s)\ud s
\]
converges for a.e.\ $t\in[0,T]$, $u\in\operatorname{Dom}\,(A)$ a.e.\ in $[0,T]$ and
\[
\|Au\|_{B_{N,\infty}^\alpha(0,T;X)}\leq C \|f\|_{B_{N,\infty}^\alpha(0,T;X)}.
\]
\end{theorem}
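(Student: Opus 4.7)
My strategy is to deduce Theorem~\ref{cor:detconv2} from Proposition~\ref{prop:detconv2} by (i) shifting $A$ so that the semigroup becomes exponentially stable, (ii) extending $f$ by zero from $(0,T)$ to $\R$ without losing the Besov-Orlicz norm, and (iii) identifying the convolution on $\R$ with $u$ on $(0,T)$.

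First, fix $\omega > 0$ exceeding the exponential growth bound of $S$ and put $\tilde S(t) = e^{-\omega t} S(t)$ with generator $\tilde A = A - \omega$, together with $g(t) = e^{-\omega t} f(t)$ and $\tilde u(t) = e^{-\omega t} u(t)$. A direct change of variables gives $\tilde u(t) = \int_0^t \tilde S(t-s) g(s) \ud s$ on $(0,T)$. Since $t \mapsto e^{\pm \omega t}$ is smooth on $[0,T]$, pointwise multiplication by it is a bounded operator on $B^{\alpha}_{\mathcal N,\infty}(0,T;X)$; using the modulus-of-continuity norm from Proposition~\ref{prop_16} and the product rule for finite differences, the relevant error term is controlled by $T^{1-\alpha}\|m'\|_\infty \|h\|_{L^{\mathcal N}}$, which is finite on a bounded interval. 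Hence $\|g\|_{B^{\alpha}_{\mathcal N,\infty}(0,T;X)} \simeq \|f\|_{B^{\alpha}_{\mathcal N,\infty}(0,T;X)}$, and from $Au = e^{\omega\cdot}(\tilde A \tilde u + \omega \tilde u)$ it suffices to bound $\|\tilde A \tilde u\|_{B^{\alpha}_{\mathcal N,\infty}(0,T;X)}$ and $\|\tilde u\|_{B^{\alpha}_{\mathcal N,\infty}(0,T;X)}$ in terms of $\|g\|_{B^{\alpha}_{\mathcal N,\infty}(0,T;X)}$.

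Next, extend $g$ by zero to $\tilde g$ on $\R$. The bound $\|\tilde g\|_{B^{\alpha}_{\mathcal N,\infty}(\R;X)} \le C \|g\|_{B^{\alpha}_{\mathcal N,\infty}(0,T;X)}$ splits into cases: for $\mathcal N(x) = x^p$ with $\alpha p < 1$ it follows from Lemma~\ref{lem:pointwise}; for $\mathcal N(x) = x^p$ with $\alpha p > 1$ the hypothesis $f(0+) = 0$ (hence $g(0+) = 0$) combined with \eqref{ext_by_zero_p} yields it; and for $\mathcal N = \Phi_\beta$ one applies \eqref{ext_by_zero_exp}, noting via \eqref{mod_holder_embeding} that elements of $B^{\alpha}_{\Phi_\beta,\infty}(0,T;X)$ are continuous up to the endpoints. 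Proposition~\ref{prop:detconv2} applied to $\tilde A$ and $\tilde g$ on $\R$ then yields
\[
\|\tilde A (\tilde S * \tilde g)\|_{B^{\alpha}_{\mathcal N,\infty}(\R;X)} + \|\tilde S * \tilde g\|_{B^{\alpha}_{\mathcal N,\infty}(\R;X)} \le C' \|\tilde g\|_{B^{\alpha}_{\mathcal N,\infty}(\R;X)}.
\]
Since $\tilde g$ vanishes on $(-\infty,0]$, for $t \in (0,T)$ one has $(\tilde S * \tilde g)(t) = \int_0^t \tilde S(t-s) g(s) \ud s = \tilde u(t)$; restricting to $(0,T)$ and combining with Step~1 gives the required bound on $\|Au\|_{B^{\alpha}_{\mathcal N,\infty}(0,T;X)}$, while $u(t)\in\operatorname{Dom}(A)$ a.e.\ and the convergence of the defining integral descend from the corresponding assertions in Proposition~\ref{prop:detconv2}.

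The main obstacle I anticipate is Step~2: the zero-extension bounds across the three regimes. The excluded critical exponent $\alpha = 1/p$ is exactly the point where neither Lemma~\ref{lem:pointwise} nor \eqref{ext_by_zero_p} applies, and the Orlicz case relies essentially on the modular H\"older embedding \eqref{mod_holder_embeding} to guarantee continuity up to the left endpoint so that the zero extension is well-behaved.
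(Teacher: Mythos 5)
Your overall architecture (shift to an exponentially stable semigroup, extend to $\R$, apply Proposition \ref{prop:detconv2}, restrict back to $(0,T)$, undo the shift via a pointwise-multiplier estimate) is the same as the paper's, and your Steps 1 and 3 are sound. The gap is in Step 2, at the \emph{right} endpoint. You extend $g$ by zero on both sides of $(0,T)$ and claim $\|\tilde g\|_{B^{\alpha}_{\mathcal N,\infty}(\R;X)}\le C\|g\|_{B^{\alpha}_{\mathcal N,\infty}(0,T;X)}$, but all three tools you invoke only control the extension by zero across a single point on the left: \eqref{ext_by_zero_p} and \eqref{ext_by_zero_exp} pass from $B^{\alpha}_{\mathcal N,\infty}(0,\infty;X)$ to $B^{\alpha}_{\mathcal N,\infty}(\R;X)$ for continuous functions vanishing on $(-\infty,0]$, and Lemma \ref{lem:pointwise} is restricted to $\alpha<1/p$. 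Nothing in the hypotheses forces $f(T-)=0$, so for $\mathcal N(x)=x^p$ with $\alpha p>1$ and for $\mathcal N=\Phi_\beta$ your $\tilde g$ generically has a jump at $t=T$; since $B^{\alpha}_{p,\infty}(\R;X)\hookrightarrow C^{\alpha-\frac1p}(\R;X)$ and $B^{\alpha}_{\Phi_\beta,\infty}(\R;X)$ embeds into a modulus H\"older space of continuous functions (by \eqref{holder_embeding} and \eqref{mod_holder_embeding}), such a $\tilde g$ does not belong to $B^{\alpha}_{\mathcal N,\infty}(\R;X)$ at all, and the claimed inequality fails. For instance $g(t)=\min(t,T/2)\,x_0$ with $x_0\neq 0$ satisfies every hypothesis of the theorem, yet its extension by zero is discontinuous at $T$. (A related slip: continuity up to the left endpoint, which you extract from \eqref{mod_holder_embeding}, is not what \eqref{ext_by_zero_exp} requires; that estimate needs the extended function to be continuous on $\R$ and to vanish on $(-\infty,0]$.)

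The paper avoids this by treating the two endpoints asymmetrically: it first extends $f$ from $(0,T)$ to $[0,\infty)$ by the reflection operator $\mathcal E_{[0,T]}$ of Proposition \ref{prop:ext}, which costs only the constant in \eqref{eq:ext_oper} and creates no singularity at $T$, and only then sets the result to zero on $(-\infty,0)$, where \eqref{ext_by_zero_p}, \eqref{ext_by_zero_exp} and Lemma \ref{lem:pointwise} (together with the hypothesis $f(0+)=0$ in the case $\mathcal N(x)=x^p$, $\alpha p>1$) do apply; since the semigroup vanishes on $(-\infty,0)$, the convolution on $\R$ still agrees with $u$ on $[0,T]$. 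If you replace your $\tilde g$ by the function equal to $\mathcal E_{[0,T]}g$ on $[0,\infty)$ and to $0$ on $(-\infty,0)$, your argument goes through and coincides with the paper's. This also repairs the subcritical case $\alpha p<1$, where Lemma \ref{lem:pointwise} needs as input a function already defined on all of $\R$ before multiplying by $\one_{(0,\infty)}$.
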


\begin{proof}
Define $F=\mathcal E_{[0,T]}f$ on $[0,\infty)$ and $F=0$ on $(-\infty,0)$ where $\mathcal E_{[0,T]}$ is the extension operator from Proposition \ref{prop:ext}, and choose $\lambda\in\Bbb R$ such that $U(t)=e^{\lambda t}S(t)$ is exponentially stable. Then
\begin{align*}
\|(A+\lambda I)U*F\|_{B_{\mathcal N,\infty}^\alpha(\R;X)}&\leq C_0\|F\|_{B_{\mathcal N,\infty}^\alpha(\R;X)}
\\
&\leq C_1\|\mathcal E_{[0,T]}f\|_{B_{\mathcal N,\infty}^\alpha(\R_+;X)}
\\
&\leq C_2\|f\|_{B_{\mathcal N,\infty}^\alpha(0,T;X)}
\end{align*}
by Proposition \ref{prop:detconv2}, \eqref{ext_by_zero_p}, \eqref{ext_by_zero_exp}, \eqref{eq:extensionzero2}  and \eqref{eq:ext_oper}. Since
\[
\|U*F\|_{B_{\mathcal N,\infty}^\alpha(\R;X)}\leq\|(A+\lambda I)^{-1}\|\|(A+\lambda I)U*F\|_{B_{\mathcal N,\infty}^\alpha(\R;X)}
\]
and $U*f=U*F$ a.e.\ on $[0,T]$, we conclude that
\begin{equation}\label{eq:firest}
\|AU*f\|_{B_{\mathcal N,\infty}^\alpha(0,T;X)}\leq C_3\|f\|_{B_{\mathcal N,\infty}^\alpha(0,T;X)}.
\end{equation}
Now by real interpolation there exists $\kappa_T$ such that
\[\|ab\|_{B^\alpha_{\mathcal N,\infty}(0,T;X)}\le\kappa_T\|a\|_{C^{0,1}[0,T]}\|b\|_{B^\alpha_{\mathcal N,\infty}(0,T;X)}\]
so
\eqref{eq:firest} yields the result as $AS*f=e^{-\lambda\cdot}[AU*(e^{\lambda\cdot}f)]$ on $[0,T]$.
\end{proof}

\section{Temporal regularity of stochastic convolutions\label{sec:SPDE}}

Let $X$ be a separable $2$-smooth Banach space, let $A$ be the generator of an analytic $C_0$-semigroup $(S(t))_{t\geq 0}$ on $X$. If $f$ belongs to $L^0_{\F}(\Omega;L^2(0,T;\gamma(H,X)))$ then we define a stochastic convolution integral $S\diamond f$ by
\[
(S\diamond f)(t) := \int_0^t S(t-s) f(s) d W(s),\qquad t\in[0,T].
\]
Since $S\diamond f$ is continuous in probability, we can assume that $S\diamond f$ is predictable (see e.g. \cite[Proposition 3.2]{DPZ}.)

Next we prove our main result. Theorem \ref{thm:mainintro} follows by taking $q=\infty$.
\begin{theorem}\label{thm:main}
Let $X$ be a separable $2$-smooth Banach space and let $A$ be the generator of an analytic $C_0$-semigroup $(S(t))_{t\geq 0}$ on $X$.
Let $q\in (2, \infty]$, $T>0$, $f\in L^0_{\F}(\Omega;L^q(0,T;\gamma(H,X)))$, set $\alpha = \frac12-\frac1q$
and let $p\in (1, \infty)$ be such that $\alpha\neq \tfrac1p$. Let $N_p(t) = t^{p} \log^{p/2}(t+1)$. Then there exists a constant $C$ such that for all $\delta,\varepsilon>0$
\begin{enumerate}[(i)]
\item $S\diamond f\in B^{\alpha}_{\Phi_2,\infty}(0,T;X)$ a.s.,
\item $(\E\|S\diamond f\|_{B^{\alpha}_{p,\infty}(0,T;X)}^{2p})^{1/(2p)} \leq C  \|f\|_{L^{2p}(\Omega;L^q(0,T;\gamma(H,X)))}$,
\item $\E\|S\diamond f\|_{B^{\alpha}_{\Phi_2,\infty}(0,T;X)}\leq C \|f\|_{L^{\infty}(\Omega;L^q(0,T;\gamma(H,X)))}$,
\item $\|S\diamond f\|_{L^{\Phi_2}(\Omega;B^{\alpha}_{\Phi_2,\infty}(0,T;X))}\leq C\|f\|_{L^{\infty}(\Omega;L^q(0,T;\gamma(H,X)))}$
\item $\P(\|{S\diamond f}\|_{B^{\alpha}_{\Phi_2,\infty}(0,T;X)}>\varepsilon, \,\|f\|_{L^q(0,T;\gamma(H,X))}\le\delta) \leq 2\exp\,\{-C^{-2}\delta^{-2}\varepsilon^2\}$
\item $\|S\diamond f\|_{L^p(\Omega;B^{\alpha}_{\Phi_2,\infty}(0,T;X))}\leq C\|f\|_{L^{N_p}(\Omega;L^q(0,T;\gamma(H,X)))}$.
\end{enumerate}
\end{theorem}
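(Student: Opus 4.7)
My plan is to reduce Theorem \ref{thm:main} to its two ingredients: Theorem \ref{thm:mainindef} on the indefinite stochastic integral, and Theorem \ref{cor:detconv2} on deterministic maximal regularity in Besov--Orlicz spaces. The bridge is the integration-by-parts identity for stochastic convolutions. Writing $M := f\cdot W$ and $(S*g)(t) := \int_0^t S(t-s) g(s)\,\ud s$, the identity to use is
\begin{equation*}
(S\diamond f)(t) = M(t) + A(S*M)(t), \qquad t\in[0,T],
\end{equation*}
which follows formally by stochastic Fubini: interchanging the order of integration in $\int_0^t AS(t-s)M(s)\,\ud s$ and using $\int_r^t AS(t-s)\,\ud s = S(t-r) - I$ on $D(A)$ shows that the right-hand side equals $(S\diamond f)(t)$. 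Once the identity is in place, everything follows by applying Theorem \ref{cor:detconv2} pathwise to $M$ and then invoking the corresponding item of Theorem \ref{thm:mainindef}.

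\textbf{Plan.} First I would justify the identity. For bounded predictable $f$, I would replace $A$ by its Yosida approximation $A_n = n A R(n,A) \in \calL(X)$, for which the stochastic Fubini theorem applies directly, and then pass $n\to\infty$ using that $n R(n,A) \to I$ strongly together with the uniform-in-$n$ bound provided by Theorem \ref{cor:detconv2}. The extension to general $f\in L^0_{\F}(\Omega;L^q(0,T;\gamma(H,X)))$ is handled by the same stopping-time device used in the proof of Theorem \ref{thm:mainindef}\eqref{it:regpas}: set $\tau_s = \inf\{t\in[0,T] : \|f\one_{[0,t]}\|_{L^q(0,T;\gamma(H,X))} > s\}$, $f^{(s)} := f\one_{[0,\tau_s)}$, prove the identity for $f^{(s)}$, and let $s\uparrow\infty$. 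Next, I would apply Theorem \ref{cor:detconv2} pathwise to $M$. Since $M(0)=0$ a.s., the condition $f(0+)=0$ appearing in that theorem for the $x^p$ case is automatically satisfied, so for $\mathcal N\in\{x\mapsto x^p,\Phi_2\}$ one obtains
\begin{equation*}
\|A(S*M)\|_{B^\alpha_{\mathcal N,\infty}(0,T;X)} \leq C \|M\|_{B^\alpha_{\mathcal N,\infty}(0,T;X)} \qquad \text{a.s.}
\end{equation*}
The triangle inequality applied to the identity then yields
\begin{equation*}
\|S\diamond f\|_{B^\alpha_{\mathcal N,\infty}(0,T;X)} \leq (1+C)\,\|f\cdot W\|_{B^\alpha_{\mathcal N,\infty}(0,T;X)} \qquad \text{a.s.}
\end{equation*}
Each of the six assertions of Theorem \ref{thm:main} now follows by applying the corresponding assertion of Theorem \ref{thm:mainindef} to $f\cdot W$ on the right. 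Assertion (i) comes from Theorem \ref{thm:mainindef}\eqref{it:regpas}; (ii) from \eqref{it:regp}; (iii) from \eqref{it:regPhi} with $p=1$; (iv) from \eqref{it:reg_L_Phi}; (v) from \eqref{it:reginftyprob}; and (vi) from \eqref{it:finalest}.

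\textbf{Main obstacle.} The conceptual content is a two-line reduction; the real work is in step one, justifying the integration-by-parts identity. A direct stochastic Fubini on $\int_0^t AS(t-s)M(s)\,\ud s$ is delicate because $\|AS(t-s)\|_{\calL(X)}$ has a non-integrable $(t-s)^{-1}$ singularity at $s=t$. The Yosida detour avoids this completely: for each $n$ the integrand $A_n S(t-s)$ is bounded, so Fubini applies; Theorem \ref{cor:detconv2} then supplies a bound on $A_n(S*M) = n R(n,A) \cdot A(S*M)$ that is uniform in $n$, which together with the strong convergence $n R(n,A)x \to x$ lets one pass to the limit in the Besov--Orlicz norm. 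Once the identity is secured, the remaining combination of Theorems \ref{thm:mainindef} and \ref{cor:detconv2} is bookkeeping, with the excluded exponent $\alpha=1/p$ inherited from Theorem \ref{cor:detconv2}.
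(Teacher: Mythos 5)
Your proposal is correct and follows essentially the same route as the paper: both rest on the factorization $S\diamond f = f\cdot W + A\bigl(S*(f\cdot W)\bigr)$, followed by a pathwise application of Theorem \ref{cor:detconv2} to $M=f\cdot W$ (whose continuity and $M(0)=0$ supply the $f(0+)=0$ hypothesis) and the corresponding items of Theorem \ref{thm:mainindef}. The only difference is cosmetic: you justify the identity via Yosida approximants $A_n=nAR(n,A)$ and closedness of $A$, whereas the paper applies the scalar stochastic Fubini theorem to $\mathbf 1_{[s\le r]}\varphi\circ S(t-r)\circ f(s)$ to get $v(t)=\int_0^t Q(t-s)f(s)\ud W(s)$ with $Q(t)=\int_0^t S(r)\ud r$ and then uses that $AQ(t-s)=S(t-s)-I$ is bounded; both are standard and valid.
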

\begin{proof}
Define $Q(t)=\int_0^tS_r\ud r$ and consider the convolution integral
\[
v(t) = \int_0^t S(t-s)(f\cdot W)(s) \ud s,\,\qquad t\in[0,T].
\]
Then $v$ is a continuous adapted process starting from zero and, for every $t\in[0,T]$,
\[
v(t)=\int_0^tQ(t-s)f(s)\ud W(s)
\]
holds a.s.\ by the real stochastic Fubini theorem applied on $\mathbf 1_{[s\le r]}\varphi\circ S(t-r)\circ f(s,\omega)$ where $\varphi\in X^*$ (see e.g.  \cite[Theorem 4.18]{DPZ}). In particular,  $v(t)\in\operatorname{Dom}(A)$ a.s.\  and $S\diamond f=Av+f\cdot W$ a.s.\ for every $t\in[0,T]$. This representation formula is well-known to experts (see \cite[Proposition 4]{DPKZ}). Now we get the result by applying Theorem \ref{cor:detconv2} and Theorem \ref{thm:mainindef}.
\end{proof}

\begin{remark}
Using Remark \ref{rem:othermoments} it is possible to give estimates for other moments than those considered in Theorem \ref{thm:main}.
\end{remark}

\end{document}